\definecolor{candyapplered}{rgb}{1.0, 0.03, 0.0}
\definecolor{mediumblue}{rgb}{0.0, 0.0, 0.8}
\declaretheorem[numberwithin=section]{theorem}
\declaretheorem[sibling=theorem, style=definition]{definition}
\declaretheorem[sibling=theorem]{lemma}
\declaretheorem[sibling=theorem, style=remark]{remark}
\declaretheorem[sibling=theorem]{proposition}
\numberwithin{equation}{section}
\newcommand{\bZ}{\mathbb{Z}}
\newcommand{\bN}{\mathbb{N}}
\newcommand{\bH}{\mathbb{H}}
\newcommand{\bR}{\mathbb{R}}
\newcommand{\bQ}{\mathbb{Q}}
\newcommand{\cA}{\mathcal{A}}
\newcommand{\PSL}{\mathrm{PSL}}
\newcommand{\cF}{\mathcal{F}}
\newcommand{\cH}{\mathcal{H}}
\newcommand{\ch}{\mathrm{ch}}
\newcommand{\ach}{\mathrm{ach}}
\newcommand{\reg}{\mathrm{reg}}
\newcommand{\Far}{\mathrm{Far}}
\begin{document}
\title[curves on the torus intersecting at most $k$ times]
{Curves on the torus intersecting at most $k$ times}
\author[Aougab, Gaster]{Tarik Aougab and Jonah Gaster}
\date{August 18, 2020}

\address{Department of Mathematical Sciences, University of Wisconsin-Milwaukee}
\email{gaster@uwm.edu}

\address{Department of Mathematics, Haverford College}
\email{taougab@haverford.edu}

\keywords{Curves on surfaces, hyperbolic geometry, Farey graph}

\begin{abstract}

We show that any set of distinct homotopy classes of simple closed curves on the torus that pairwise intersect at most $k$ times has size $k+O(\sqrt k \log k)$. 
Prior to this work, a lemma of Agol, together with the state of the art bounds for the size of prime gaps, implied the error term $O(k^{21/40})$, and in fact the assumption of the Riemann hypothesis improved this error term to the one we obtain $O(\sqrt k\log k)$.
By contrast, our methods are elementary, combinatorial, and geometric.

\end{abstract}

\maketitle

\section{Introduction}

Let $T \approx \bR^2/\bZ^2$ be the closed oriented surface of genus one.
We indicate the homotopy class of an embedding of $S^1$ briefly by `curve'. 
By pulling a curve tight and lifting it to the universal cover, the collection of curves on $T$ is in one-to-one correspondence with slopes $\bQ \cup \{\infty\}$. 
From this vantage point, the \emph{intersection number} of a pair of curves on $T$ (that is, the minimum possible number of intersection points among representatives from the pair of homotopy classes) can be computed explicitly via
\[
\iota\left( \frac pq , \frac ab\right) = \ | \ pb-qa \ | \ ~.
\]
A collection of curves is called a \emph{$k$-system} when any pair of curves has intersection number at most $k$.

Let $\eta_S(k)$ equal the maximum size of a $k$-system on the closed surface $S$.
It was first shown by \cite{JMM} that $\eta_S(k)$ goes to infinity with $k$. 
The determination of the growth rate of $\eta_S(k)$, as a function of both $k$ and the genus $g$ of $S$, is a subtle counting problem, about which much remains unknown \cite{Przytycki, Aougab, ABG, Greene1, Greene2}. 
Notably, Greene has used probabilistic methods, leveraging the hyperbolic geometric bounds of Przytycki, to obtain $\eta_S(k) =O( g^{k+1}\log g)$, when $k$ is fixed and $g$ grows \cite[Thm.~3]{Greene2}.

For the study of $\eta_S(k)$ with $g$ fixed, the simplest nontrivial case is evidently $S=T$.
While studying Dehn filling slopes of 3-manifolds, Agol observed that $\eta_T(k)$ is at most one more than the smallest prime greater than $k$ \cite{Agol}, and via the Prime Number Theorem this implies $\frac{\eta_T(k)}k \to 1$ as $k\to \infty$ \cite{Agol-unpub}.

More can be said. 
The size of prime gaps, large and small, is a major field of study.
The currently best upper bound is due to Baker-Harman-Pintz, which, together with Agol's observation, implies that $\eta_T(k) = k+O(k^{21/40})$ \cite{BHP}. 
Cram\'er showed that a positive resolution of the Riemann hypothesis would provide $\eta_T(k) = k + O(\sqrt k \log k)$ \cite{Cramer1}, and he formulated a stronger conjecture that would imply $\eta_T(k) = k+O((\log k)^2)$ \cite{Cramer2}; although there seems to be general suspicion in the analytic number theory community that Cram\'er's error term should be replaced by $O((\log k)^{2+\epsilon})$ \cite{Granville, OSS}.

All of these estimates pass through Agol's remarkable prime number bound, but it is reasonable to be skeptical about whether estimation of $\eta_T(k)$ should depend on such notoriously subtle and difficult questions.
The purpose of this note is to sharpen currently available estimates, without reference to fine data about the distribution of the primes.
Our methods are elementary, combinatorial, and hyperbolic.

\begin{theorem}
\label{main thm}
There is a constant $C>0$ so that $\eta_T(k)\le k+C\sqrt k \log k$.   
\end{theorem}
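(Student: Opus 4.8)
The plan is to pass from the torus to the Farey tessellation of $\bH^2$ and turn the problem into a lattice-point count, or, what is the same, a count of mutually near horoballs in the Ford circle packing. Let $S$ be a $k$-system with $|S|=\eta_T(k)$. Applying an element of $\SL_2(\bZ)$ (which acts on $T$ by a homeomorphism and so preserves all intersection numbers) I may assume $\infty\in S$; then every other slope $p/q\in S$ has $\iota(\infty,p/q)=|q|\le k$, so, writing each slope as a primitive vector $(q,p)\in\bZ^2$ with $q\ge 1$, the hypothesis becomes $1\le q_i\le k$ and $|\det(v_i,v_j)|\le k$ for all $i,j$, where $v_i=(q_i,p_i)$. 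Choosing $s_0=p_0/q_0\in S$ with $q_0$ minimal and conjugating by a parabolic $z\mapsto z+c$ (which fixes $\infty$ and preserves all $q_i$) I may arrange $0\le p_0<q_0$; since $|\det((q_0,p_0),(q_i,p_i))|=q_0q_i|s_i-s_0|\le k$, every finite slope then satisfies $|s_i-s_0|\le k/(q_0q_i)$, so the $v_i$ lie in a thin fan about the ray through $(q_0,p_0)$ and within bounded distance of a fixed half-plane. Equivalently, in the Ford packing -- where the horoball $B_{p/q}$ at $p/q$ has Euclidean diameter $1/q^2$, $B_\infty=\{\mathrm{Im}\ge 1\}$, and $\iota(\alpha,\beta)=e^{d_{\bH^2}(B_\alpha,B_\beta)/2}$ -- the elements of $S$ become horoballs pairwise within hyperbolic distance $2\log k$; expanding each by $\log k$ turns this into a family of pairwise-intersecting disks, one of diameter $k/q^2$ per slope $p/q$ and the half-plane $\{\mathrm{Im}\ge 1/k\}$ for $\infty$.

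The crux is then a counting estimate: such a configuration has at most $k+O(\sqrt k\log k)$ members. A natural line of attack is to sort the finite slopes of $S$ by the dyadic size $2^j$ of their denominators. Any two slopes satisfy $1/(q_iq_j)\le|s_i-s_j|\le k/(q_iq_j)$, so within one dyadic block the slopes lie in an interval of length $O(k/4^j)$ and are $\Omega(4^{-j})$-separated, which bounds that block; moreover, a single slope of denominator $\approx 2^\ell$ confines every slope of denominator $\approx 2^j<2^\ell$ to a window of width $O(k/2^{j+\ell})$. Playing these constraints against one another across the threshold $q=\sqrt k$, and invoking standard counts of Farey fractions in short intervals, one wants to conclude that a single scale of denominators supplies the main term -- no more than $k$, as the configurations $\{\infty\}\cup\{0,1,\dots,k-1\}$ and $\{\infty\}\cup\{1/q:1\le q\le k\}$ show is sharp -- while the remaining $O(\log k)$ scales together contribute only $O(\sqrt k\log k)$.

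The main obstacle is forcing the leading coefficient to be exactly $1$ rather than merely $O(1)$, while still keeping the error at $O(\sqrt k\log k)$. The obvious bounds -- estimating the slopes of each denominator $q$ by $k/q+O(1)$ and summing, or comparing every slope only to one reference slope -- over-count: equality in them would require several denominator classes to be simultaneously ``full'', which the pairwise conditions forbid. Making this dichotomy -- in effect, that at most one scale of denominators can be abundant -- both precise and sharp, and verifying that the subordinate scales genuinely sum to $O(\sqrt k\log k)$ and not to a second term of size $\Theta(k)$, is the heart of the argument; the normalization and the passage to horoballs or to lattice points are routine.
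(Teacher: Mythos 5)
Your setup (normalizing so that $\infty\in S$, translating intersection numbers into determinants and into hyperbolic distances between Ford circles via $\iota=e^{d/2}$) is correct and matches the paper's \autoref{lem:i vs d}, but the proof has a genuine gap exactly where you say ``the heart of the argument'' lies: the counting estimate that such a configuration has at most $k+O(\sqrt k\log k)$ members is asserted, not proved. The dyadic bounds you do state are too lossy to give it. Within a dyadic block $q\approx 2^j$ your spacing-versus-window argument gives a bound of order $k$ for that single block, so summing over the $O(\log k)$ scales yields only $O(k\log k)$; and the ``at most one scale can be abundant'' dichotomy, which is supposed to force the leading coefficient to be exactly $1$, is precisely the statement you leave unestablished. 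Note also that your normalization only guarantees that every slope has intersection at most $k$ with the reference slope $\infty$; with a reference point of ``height'' $k$ rather than $\sqrt k$, even a correct version of your bookkeeping would produce an error term of size $k\log k$ rather than $\sqrt k\log k$.

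The paper supplies the two missing mechanisms. First (\autoref{prop: exists horoball}), one does not take an arbitrary element of $S$ as the reference: using the midpoint of the geodesic between the two \emph{farthest} horoballs and $\delta$-hyperbolicity, one finds a Farey horoball $H$ (automatically in the configuration, by convexity/maximality) such that \emph{every} member has intersection at most $C\sqrt{\kappa}$ with $H$; this is what makes the eventual error $O(\sqrt k\log k)$ instead of $O(k\log k)$. Second (\autoref{prop: control with height}), the main term with coefficient exactly $1$ is extracted not by dyadic blocks but by an exact count at each denominator $k'\le h$ using Euler's totient, corrected by extremal numerators on the two outermost branches, and then a convex combination of genuine pairwise intersection numbers is formed using the graph $\Gamma_h$ of \autoref{prop:Gammas}, whose vertex $k'$ has valence $\phi(k')$ and whose edge weights $2/(k'k'')$ sum to $1$; Dirichlet's divisor bound and $\sum\phi(k')/k'=O(h)$ then control the loss by $O(h\log h)$. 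Your proposal identifies the right difficulty but defers it, so as written it does not prove the theorem; to complete it along your lines you would need substitutes for both of these steps, and the dyadic decomposition by itself cannot recover the sharp constant $1$ in the main term.
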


As for sharpness, it deserves remarking that there is a dearth of nontrivial lower bounds for $\eta_T(k)$.
In fact, we are unaware of any example of a $k$-system of size $k+7$ on the torus.

The function $\eta_T(k)$ admits a dual formulation: let $\kappa_S(n)$ indicate the minimum, taken over collections of $n$ curves on $S$, of the maximum pairwise intersection. 
(For clarity, we will often use `$n$' to indicate the size of a set of curves and `$k$' for an intersection number.)
It is not hard to see that
\[
\eta_S(k) = \max \{ n: \kappa_S(n) \le k\} \ , \text{ and } \
\kappa_S(n) = \min \{k : \eta_S(k)\ge n\}~.
\]
Our path towards \autoref{main thm} will be to first estimate $\kappa_T(n)$ from below.

There is a kind of convexity to exploit in the study of $\kappa_T$, originally observed by Agol.
As remarked above, curves on $T$ are in correspondence with slopes $\bQ\cup \{\infty\}$. 
The latter form the vertices of the \emph{Farey complex} $\cF$, in which a set of slopes form a simplex when they pairwise intersect once.  
Any collection of $n$ curves that is maximal with respect to inclusion among $k$-systems determines a collection of vertices in $\cF$ so that the induced simplicial complex is a triangulated $n$-gon (see \autoref{lem: convexity} for detail).

Conversely, any triangulation of an $n$-gon can be realized as a subcomplex of $\cF$, in a way that is unique up to the action of $\PSL(2,\bZ) \curvearrowright \cF$ by simplicial automorphism. As $\PSL(2,\bZ)$ preserves the intersection form, the (multi-)set of pairwise intersection numbers is a well-defined function on the set of triangulations of an $n$-gon. 
The set of triangulations of an $n$-gon forms the vertex set of a well-studied simplicial complex in combinatorics called the \emph{associahedron} $\cA_n$
(there is a slight indexing issue; the object we refer to as $\cA_n$ is the $(n-2)$-dimensional associahedron).
One obtains a `max intersection' function $\kappa: \cA_n \to \bN$ induced by the intersection form on $\cF$,
and the above discussion leads to $\kappa_T(n)=\min \kappa$ (see \autoref{prop:kappa=kappa}). 

\autoref{main thm} follows from the following:

\begin{theorem}
\label{thm: kappa}
There is a constant $C>0$ so that, for any $\tau\in \cA_n$, we have $\kappa(\tau) \ge n - C \sqrt n \log n $.
\end{theorem}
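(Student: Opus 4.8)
The plan is to derive \autoref{main thm} from \autoref{thm: kappa} via the elementary identities $\eta_T(k)=\max\{n:\kappa_T(n)\le k\}$ and $\kappa_T(n)=\min_{\tau\in\cA_n}\kappa(\tau)$ recorded above, so that all of the content lies in the lower bound $\kappa(\tau)\ge n-C\sqrt n\log n$ for a fixed triangulation $\tau$. First realize $\tau$ as a subcomplex of $\cF$; since $\PSL(2,\bZ)$ acts transitively on edges of $\cF$, we may arrange that one boundary edge of the $n$-gon is $\{0/1,\,1/0\}$. Put $K:=\kappa(\tau)$. Because $\iota(0/1,p/q)=p$ and $\iota(1/0,p/q)=q$ in lowest terms, every other vertex is a fraction $p/q$ with $1\le p,q\le K$; the finite slopes form an increasing ``Farey path'' $0=u_0<u_1<\dots<u_{n-2}=a$ with $\iota(u_i,u_{i+1})=1$, where $a:=u_{n-2}\le K$ is an integer; and an increasing Farey path cannot jump over an integer, so each of $0,1,\dots,a$ occurs among the $u_i$. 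Translating this back into $\cF$: the vertex $1/0$ has $\cF$-degree $a$ in $\tau$, the polygon is a ``fan'' from $1/0$ over $0,1,\dots,a$, and the $n-a-2$ remaining (non-integer) vertices are distributed among triangulated subpolygons $P_0,\dots,P_{a-1}$, with $P_m$ filling the gap between $m$ and $m+1$.

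The crux is then to bound the number $n-a-2=\sum_m n_m$ of non-integer vertices, where $n_m$ counts those lying in the gap $(m,m+1)$. I would count by fractional part: a slope with fractional part $c/d$ in lowest terms lies in the gap $(m,m+1)$ only if $\iota(0/1,\,m+c/d)=md+c\le K$ and $\iota(a/1,\,m+c/d)=d(a-m)-c\le K$, which (adding the two, $da\le 2K$) forces $d\le 2K/a$ and confines $m$ to an interval of length at most $2K/d-a$; moreover two slopes with the same fractional part $c/d$ occupying gaps $m<m'$ have $\iota=d^2(m'-m)$, so each fractional part occurs in at most $K/d^2+1$ gaps. One then estimates
\[
n-a-2\ \le\ \sum_{d\ge 2}\varphi(d)\,\min\!\Big\{\tfrac{2K}{d}-a+1,\ \tfrac{K}{d^2}+1\Big\},
\]
supplements this (when $a$ is small) with the cross-gap inequalities $\iota(m+c/d,\,m'+c'/d')>dd'(m'-m-1)$, which forbid two widely separated ``fat'' gaps, and with the bound $\kappa(P_m)<\min\{K,(2K/a)^2\}$ on the subpolygons, and balances the resulting estimates against $n=a+2+(n-a-2)$ and $a\le K$. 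The $\log k$ appears as the harmonic loss $K\sum_{d\le\sqrt K}\varphi(d)/d^2\asymp K\log K$-type terms trading against the $\sum_{d\le 2K/a}\varphi(d)\asymp(K/a)^2$-type terms, with the optimum over $a$ having the shape $n\le K+O(\sqrt K\log K)$; the $\sqrt k$ is already visible in the near-extremal examples consisting of all integers $0,\dots,a$ together with a copy of the Farey sequence $F_{\sqrt K}$ placed in a single gap, for which $n\approx K+\tfrac{6}{\pi^{2}}\sqrt K$.

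The main obstacle I anticipate is exactly this balancing: the near-extremal examples leave almost no slack, so the count cannot be carried out through crude per-gap bounds — Bertrand's postulate applied to each $P_m$ loses a factor of two, and Agol's prime-modulus argument applied globally reintroduces the prime-gap term one is trying to avoid — and the cross-gap intersection constraints must be exploited essentially optimally while tracking the harmonic-sum loss precisely. A subsidiary but routine point is to establish the structural claims used above (the Farey path never skips an integer, the fan description, and $\kappa(P_m)<(2K/a)^2$), all of which follow from analyzing the Stern--Brocot recursion $v_{i+1}=c_iv_i-v_{i-1}$ with $c_i\ge 1$ governing consecutive vertices of a convex cyclic sequence in $\cF$.
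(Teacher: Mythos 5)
Your setup is not wrong, and in fact it shadows the paper's own structure: your apex $1/0$ over the fan $0,1,\dots,a$ is exactly a horoball $H$ of width $a$, your gaps $P_m$ are its branches, the denominator of a slope in a gap is its height relative to $H$, and your count of non-integer vertices by fractional part is essentially \autoref{lem:count n}. The genuine gap is precisely the step you flag and defer. Your displayed inequality $n-a-2\le\sum_{d\ge2}\varphi(d)\min\{2K/d-a+1,\,K/d^2+1\}$ cannot yield an additive error $O(\sqrt K\log K)$: the $d=2$ term alone can contribute about $K/4$, and summing $K\varphi(d)/d^2$ over $2\le d\le 2K/a$ gives a bound of the shape $n\le a+cK\log(2K/a)+c(K/a)^2$, which even in the best case $a\asymp K$ is only $n\le(1+c)K$ — a multiplicative loss. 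The paper does not sharpen this count; it converts it into a \emph{convex combination of actual pairwise intersection numbers}. The key observations are that the extremal vertices at heights $k$ and $k'$ lie in gaps at least $x_k+x_{k'}-2$ apart, so $\tfrac1{kk'}(I_{kk'}+I_{k'k})\ge x_k+x_{k'}-2+\cdots$, and that the graph of \autoref{prop:Gammas} (edges $k\sim k'$ with $\gcd(k,k')=1$, $k+k'>h$) has valence $\phi(k)$ at $k$ and total edge weight $\sum 2/(kk')=1$; pairing these with \autoref{lem:count n} gives $\kappa(\tau)\ge n-O(h\log h)$ (\autoref{prop: control with height}). Your "cross-gap inequalities must be exploited essentially optimally" is exactly this missing mechanism, and without it the plan does not close.

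There is a second missing ingredient: control of the maximal denominator. Your normalization only gives $d\le 2K/a$, which is useless when $a$ is small, and the width of the best apex can be tiny while $\kappa$ is huge (for the alternating chain all widths are at most $3$), so no choice of boundary edge plus the bound $2K/a$ can substitute for what is actually needed, namely a horoball whose \emph{height} is $O(\sqrt{\kappa(\tau)})$. The paper obtains this by hyperbolic geometry (\autoref{prop: exists horoball}): via \autoref{lem:i vs d} and \autoref{lem:distance from horoball}, a Ford circle near the midpoint of the geodesic realizing $\kappa(\tau)$ is a horoball of $\tau$, and $\delta$-thinness bounds its distance, hence its intersection number, with every other horoball by $C\sqrt{\kappa(\tau)}$; only then does $O(h\log h)$ become $O(\sqrt n\log n)$ (after the trivial reduction to the case $\kappa(\tau)\le n$). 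Your proposed recursion into the subpolygons $P_m$ is not carried out and faces the same difficulty one level down. Finally, the example guiding your balancing is not a valid configuration: with $a\approx K$ integers, a slope of denominator $\approx\sqrt K$ in any gap meets a distant integer about $K^{3/2}$ times, so that family has $\kappa\gg K$; indeed no $k$-system on $T$ of size even $k+7$ is known, so no example can exhibit the $\sqrt K$ term you describe.
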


We briefly describe the proof of this theorem.
The Farey complex $\cF$ admits a natural embedding into a compactification of the hyperbolic plane $\bH^2 \,\cup\, \partial_\infty \bH^2$, so that the vertices of $\cF$ embed naturally as $\bQ \cup\{\infty\} \hookrightarrow \bR\cup\{\infty\} \approx \partial_\infty \bH^2$, with edges between vertices mapping to geodesics.
The hyperbolic plane $\bH^2$ admits a maximal $\PSL(2,\bZ)$-invariant horospherical packing $\{H_{p/q}: \frac pq \in \bQ\cup\{\infty\}\}$, where $H_{p/q}$ is centered at $p/q\in \bR\cup\{\infty\}$, so that a set of slopes span a simplex in $\cF$ precisely when the corresponding horospheres are pairwise tangent. (The horospheres $\{H_{p/q}\}$ are called \emph{Ford circles} in the literature \cite{Ford, ConwayGuy, BonahonVid}.)

A sketch of our proof of \autoref{thm: kappa} is as follows:
\begin{enumerate}
\item Locate a `nice horoball' $H$ for $\tau$, so that $\mathrm{ht}(\tau,H)$, the \emph{height of $\tau$ relative to $H$}, is $O\left(\sqrt{\kappa(\tau)}\right)$. 
See \autoref{def:horoball height width} and \autoref{prop: exists horoball}.

\item Use $H$ to construct a convex combination of pairwise intersection numbers for $\tau$ whose sum is at least $n - O(h \log h)$, where $h= \mathrm{ht}(\tau,H)$. 
It follows that there is a pair of horoballs of $\tau$ with intersection number at least $n-O(h \log h)$.
See \autoref{prop: control with height}.

\end{enumerate}
The proof of \autoref{thm: kappa} is now one line: if $\kappa(\tau)\le n$, then $\kappa(\tau) \ge n-C \sqrt n \log n$.

The first step above uses the hyperbolic geometry of $\bH^2$ in an essential way, in which we exploit a simple relationship between intersection numbers, hyperbolic geometry, and Ford circles (see \autoref{lem:i vs d}).

\subsection*{Organization}
We describe the reduction from $\kappa_T(n)$ to $\kappa:\cA_n\to\bN$ in \S\ref{sec:prelim}, analyze several examples in \S\ref{sec:examples}, bound $\kappa(\tau)$ from below in \S\ref{sec:estimating kappa}, locate a good horoball for $\tau$ in \S\ref{sec:controlling height}, and prove \autoref{main thm} in \S\ref{sec:pf of main thm}.

\subsection*{Acknowledgements}
The authors thank Josh Greene and Ian Agol for valuable feedback on an early draft of this paper. We are especially grateful to Ian Agol for sharing with us an unpublished note that contained \autoref{lem: convexity}, and for suggesting an alternative to our proof of \autoref{prop: exists horoball} (see \autoref{rem: Agol proof}).

\section{Preliminaries}
\label{sec:prelim}

We collect here some useful facts about intersection numbers and the Farey graph $\cF$. 
For more of the beautiful connections between hyperbolic geometry, the Farey graph, continued fractions, and Diophantine approximation, we suggest the reader consult \cite{Hatcher, Series, Series2, Springborn}.

\subsection{Horoballs in trees, Farey labellings, and intersection numbers}
Dual to a triangulation of an $n$-gon $\tau\in\cA_n$ there is a trivalent tree with $n$ leaves embedded in the plane, which we refer to as $\tau^*$.
Because $\tau^*$ is embedded in the plane, the three edges incident to vertices of $\tau^*$ are cyclically ordered. 
Hence any non-backtracking path in $\tau^*$ induces a sequence of left-right turns.

The vertices of $\tau$ (that is, the slopes of the $k$-system) correspond to `horoball' regions in $\tau^*$: 

\begin{definition}[`Horoballs in trees']
\label{def:horoball}
A \emph{horoball} of $\tau$ is a union of edges in a path of the dual tree $\tau^*$ that is composed of uni-directional turns (that is, only left or only right turns), which is moreover maximal with respect to inclusion among all such uni-directional subsets of $\tau^*$.
\end{definition}

For any triangulation $\tau$, the dual tree $\tau^*$ admits an orientation-preserving embedding to the regular trivalent tree dual to $\cF$.
Any such choice of an embedding determines a map from horoballs of $\tau^*$ to $\bQ\cup \{\infty\}$, by recording the center of the corresponding horoball in $\bH^2$.

\begin{definition}[`Farey labellings and intersection numbers']
A \emph{Farey labelling} of $\tau$ is the map from horoballs to $\bQ\cup\{\infty\}$ obtained from an orientation-preserving embedding from $\tau^*$ to the tree dual to $\cF$.
The \emph{intersection number} $\iota(H_1,H_2)$ of a pair of horoballs $H_1$ and $H_2$ is given by the intersection number of the slopes corresponding to $H_1$ and $H_2$ in a Farey labelling of $\tau$.
\end{definition}
\noindent
We leave it as an exercise for the reader to show that intersection numbers in $\tau$ are well-defined.

Farey labellings are especially pleasant because the vertices spanning a simplex of $\cF$ satisfy a remarkably simple relationship. Namely, if $\frac pq$ and $\frac ab$ span an edge of $\cF$, then the two other vertices of $\cF$ that span a triangle with $\frac pq$ and $\frac ab$ are $\frac{p+ a}{q+ b}$ and $\frac{p-a}{q- b}$; this is the `Farey addition' rule. 
Farey addition can be used to construct a Farey labelling of $\tau$: 
Choose labels $1/0$, $0/1$, and $1/1$ for the three horoballs incident to some vertex of $\tau^*$, and use Farey addition to successively add labels to neighboring horoballs.

\subsection{Monotonicity of intersection numbers and left-right sequences}
\label{subsec:monotonicity}
The intersection number $\iota(H_1,H_2)$ admits a description more intrinsic to the structure of $\tau^*$, which we now describe.
There is a unique (possibly degenerate) non-backtracking path $\sigma$ between the pair of horoballs $H_1$ and $H_2$, and this path determines a sequence of left-right turns $(\ell_1,\ell_2,\ldots,\ell_s)$, where $\sigma$ makes $\ell_1$ turns in the same direction, followed by $\ell_2$ turns in the opposite direction, etc.
The quantity $\iota(H_1,H_2)$ is given by the numerator of the continued fraction with coefficients $(\ell_1,\ell_2,\ldots,\ell_s)$ \cite[Thm.~5.3]{GLRRX}.

\begin{remark}
\label{rem:ambiguity}
Observe that there is ambiguity in this computation of $\iota(H_1,H_2)$.
For one, the non-backtracking path $\sigma$ may go either from $H_1$ to $H_2$, or from $H_2$ to $H_1$.
Moreover, one must declare that $\sigma$ is starting with either `left' or `right' at its origin vertex, so that it can be observed whether $\sigma$ is switching directions or not at later vertices.
These choices may be made arbitrarily and independently, and this ambiguity has no affect on the calculation of $\iota(H_1,H_2)$. See \cite[Fig.~3, Ex.~1]{GLRRX}. 
\end{remark}

This viewpoint suggests a certain monotonicity.

\begin{lemma}[`Monotonicity of intersection numbers']
\label{lem:monotonicity}
Suppose that $\sigma$ and $\sigma'$ are non-backtracking paths with respective left-right sequences $(\ell_1,\ldots,\ell_s)$ and $(\ell_1',\ldots,\ell_{s'}')$. 
If $s'\ge s$ and $\ell_i'\ge \ell_i$ for each $i=1,\ldots,s$, then the intersection number determined by $\sigma'$ is at least that determined by $\sigma$.
\end{lemma}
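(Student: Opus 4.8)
The plan is to translate the statement into an elementary monotonicity property of \emph{continuants}. Recall (as in the discussion preceding the lemma, and \cite[Thm.~5.3]{GLRRX}) that the intersection number determined by a non-backtracking path with left-right sequence $(\ell_1,\ldots,\ell_s)$ is the numerator of the continued fraction
\[
[\ell_1,\ldots,\ell_s] \;=\; \ell_1 + \cfrac{1}{\ell_2 + \cfrac{1}{\ddots + \cfrac{1}{\ell_s}}}~,
\]
and a standard computation identifies this numerator with the continuant $K(\ell_1,\ldots,\ell_s)$, defined recursively by $K(\,) = 1$, $K(\ell_1) = \ell_1$, and $K(\ell_1,\ldots,\ell_j) = \ell_j\,K(\ell_1,\ldots,\ell_{j-1}) + K(\ell_1,\ldots,\ell_{j-2})$. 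Since a left-right sequence records the lengths of the maximal runs of turns in a single direction, I take each $\ell_i$ (and each $\ell_i'$) to be a positive integer; the degenerate case $s=0$ gives intersection number $K(\,)=1$, for which the lemma is immediate.

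Next I would record two monotonicity facts for continuants of tuples of positive integers. First, \emph{length monotonicity}: $K(\ell_1,\ldots,\ell_j) \ge K(\ell_1,\ldots,\ell_{j-1})$ for every $j\ge 1$. This follows from the recursion together with the observation (an immediate induction) that $K(\ell_1,\ldots,\ell_m)\ge 1$ for all $m\ge 0$: indeed $K(\ell_1,\ldots,\ell_j) = \ell_j K(\ell_1,\ldots,\ell_{j-1}) + K(\ell_1,\ldots,\ell_{j-2}) \ge K(\ell_1,\ldots,\ell_{j-1})$ because $\ell_j\ge 1$ and $K(\ell_1,\ldots,\ell_{j-2})\ge 0$. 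Second, \emph{coordinatewise monotonicity}: if $\ell_i \le \ell_i'$ for $i=1,\ldots,s$ then $K(\ell_1,\ldots,\ell_s)\le K(\ell_1',\ldots,\ell_s')$. I would prove this by induction on $s$, strengthening the inductive statement to assert simultaneously that $K(\ell_1,\ldots,\ell_{s-1}) \le K(\ell_1',\ldots,\ell_{s-1}')$, so that both terms on the right of the recursion can be compared; the inductive step is then a single application of the recursion, using $\ell_s\le\ell_s'$ and nonnegativity of continuants. (Alternatively, one could invoke Euler's rule, which expresses $K(\ell_1,\ldots,\ell_s)$ as a sum of monomials in the $\ell_i$ all with coefficient $1$, making monotonicity in each variable manifest.)

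The lemma then follows by chaining the two facts. Using that each $\ell_i'\ge 1$, repeated application of length monotonicity (peeling off $\ell_{s'}',\ell_{s'-1}',\ldots,\ell_{s+1}'$) gives
\[
K(\ell_1',\ldots,\ell_{s'}') \;\ge\; K(\ell_1',\ldots,\ell_s')~,
\]
and then coordinatewise monotonicity gives $K(\ell_1',\ldots,\ell_s') \ge K(\ell_1,\ldots,\ell_s)$. The left-hand side is the intersection number determined by $\sigma'$ and the right-hand side that determined by $\sigma$, which is exactly the claim.

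I do not expect a genuine obstacle here; the only point requiring care is the bookkeeping around the ambiguity noted in \autoref{rem:ambiguity}. One must ensure that, for \emph{both} paths, the left-right sequence is taken in its genuine run-length form, with every entry $\ge 1$, so that length monotonicity applies to the ``extra'' coordinates $\ell_{s+1}',\ldots,\ell_{s'}'$ of $\sigma'$ and so that the base estimate $K(\cdots)\ge 1$ is valid throughout. This is automatic once the sequences are read off as maximal single-direction runs, which is the convention implicit in the hypotheses $s'\ge s$ and $\ell_i'\ge \ell_i$.
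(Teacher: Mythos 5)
Your proof is correct, but it takes a genuinely different route from the paper's. You work entirely on the arithmetic side: identifying the intersection number with the continuant $K(\ell_1,\ldots,\ell_s)$ and proving from the recursion $K(\ell_1,\ldots,\ell_j)=\ell_j K(\ell_1,\ldots,\ell_{j-1})+K(\ell_1,\ldots,\ell_{j-2})$ both length monotonicity and coordinatewise monotonicity (the latter also visible at a glance from Euler's rule), then chaining the two; this makes the lemma self-contained modulo only the continued-fraction formula of \cite[Thm.~5.3]{GLRRX}, and your attention to the convention that run-length entries are $\ge 1$ (with the degenerate case $s=0$ handled separately) correctly disposes of the ambiguity in \autoref{rem:ambiguity}. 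The paper instead quotes \cite[Lem.~5.5]{GLRRX} for the coordinatewise comparison and only treats the new case $s'>s$, reducing to the situation where $\sigma'$ contains $\sigma$ as an initial subpath and arguing via a Farey labelling: with $1/0$ at the common origin, Farey addition makes denominators non-decreasing along the path, and the denominator is exactly the intersection number with $1/0$. Your argument buys independence from the cited lemma and a transparent, purely combinatorial induction; the paper's buys brevity and stays inside the Farey-labelling formalism it uses throughout, which is the same mechanism (denominator growth under Farey addition) that your length-monotonicity step encodes recursively. Either proof is complete and correct.
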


\begin{proof}
This lemma is almost exactly \cite[Lem.~5.5]{GLRRX}, with the sole difference that we may have $s'>s$.
Therefore to prove the claim we may assume that $\sigma'$ contains $\sigma$ as an initial subpath.

Choose a Farey labelling with label $1/0$ at the horoball forming the origin of $\sigma$ (and $\sigma'$), and labels $0/1$ and $1/1$ at the two neighboring horoballs that intersect $\sigma$. 
Compare the denominators of the Farey labels of the horoballs at the terminuses of $\sigma$ and $\sigma'$; because these labels are computed using Farey addition, it is evident that the denominator of the horoball for $\sigma'$ is at least that of $\sigma$. 
The intersection number of any horoball with $1/0$ is given by the denominator of its Farey label, so the claim follows.
\end{proof}

\subsection{Intersection numbers and hyperbolic distance}
The quotient of $\bH^2$ by $\PSL(2,\bZ)$ is a hyperbolic orbifold with one cusp and two orbifold points, one of order $2$ and one of order $3$. 
The preimage of the maximal horoball neighborhood of the cusp under the covering projection $\bH^2 \to \bH^2/\PSL(2,\bZ)$ is $\cH=\{H_{p/q}:p/q\in\bQ\cup\{\infty\}\}$, 
a $\PSL(2,\bZ)$-invariant collection of horoballs centered at the completed rationals. 
The following lemma is an exercise in hyperbolic geometry.

\begin{lemma}
\label{lem:distance from horoball}
Every point in $\bH^2$ is within $\log\frac2{\sqrt 3}$ of a horoball in $\cH$.
\end{lemma}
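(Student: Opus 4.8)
The plan is to work in the upper half-plane model $\bH^2 = \{z = x+iy : y > 0\}$ with the maximal $\PSL(2,\bZ)$-invariant horoball packing $\cH = \{H_{p/q}\}$. Recall that $H_\infty$ is the horoball $\{y \ge 1\}$, and for a reduced fraction $p/q$ the horoball $H_{p/q}$ is the Euclidean disk of diameter $1/q^2$ tangent to the real axis at $p/q$. Two horoballs $H_{p/q}$ and $H_{a/b}$ are tangent exactly when $|pb - qa| = 1$. By $\PSL(2,\bZ)$-invariance of both the packing and the distance function, it suffices to bound the distance from a single conveniently chosen point $z_0$ to the packing $\cH$, provided $z_0$ lies in (the closure of) a fundamental domain for the action, or more simply: to show \emph{every} point is within $\log(2/\sqrt 3)$, I will show this for one point per orbit and observe the orbit of the worst point is dense enough — but cleanest is to directly locate the point farthest from $\cH$.

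**Locating the extremal point.**

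First I would observe that the `deepest hole' in the packing sits, by symmetry, at the point equidistant from the three mutually tangent horoballs $H_\infty = \{y\ge 1\}$, $H_0$ (diameter $1$, tangent at $0$), and $H_1$ (diameter $1$, tangent at $1$). The three mutual tangency points of this triple are $i$, $1+i$, and the point where $H_0$ and $H_1$ touch, which is $\tfrac12 + \tfrac{i}{2}$. The common perpendicular geometry forces the extremal point to be $z_0 = \tfrac12 + i y_0$ for the value of $y_0$ making $\mathrm{dist}(z_0, H_\infty) = \mathrm{dist}(z_0, H_0)$ (the $H_1$ case is symmetric). The key step is then the distance-to-horoball formula: the hyperbolic distance from a point $z$ with $\mathrm{Im}(z) = y$ to the horoball $\{y' \ge 1\}$ is $\log(1/y)$ when $y < 1$; and the distance from $z$ to a horoball $H$ of Euclidean diameter $d$ tangent at a real point $t$ is $\log\!\big(\tfrac{d}{|z-t|^2}\cdot\text{(something)}\big)$ — more precisely, applying the isometry $w \mapsto -1/(w-t)$ sending $H$ to a horizontal horoball and tracking the imaginary part. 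Setting the two distances equal and solving gives $y_0 = \sqrt 3 / 2$, whence $\mathrm{dist}(z_0, \cH) = \log(1/y_0) = \log(2/\sqrt 3)$.

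**Why this point is extremal, and the main obstacle.**

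The nontrivial content — and the step I expect to be the real obstacle — is verifying that $z_0$ is genuinely the point \emph{maximizing} distance to $\cH$, i.e. that no point lies farther than $\log(2/\sqrt 3)$. Here I would use that the ideal triangle $\Delta$ with vertices $\infty, 0, 1$ is tiled by the orbit of the $\PSL(2,\bZ)$-action in a way compatible with $\cH$: restricted to the closed region cut off by the three horoballs $H_\infty, H_0, H_1$ (a compact `hyperbolic triangle with horocyclic sides'), the function $z \mapsto \mathrm{dist}(z, \cH)$ is realized by the distance to one of these three horoballs, and on this compact region the three-way symmetry (the order-$3$ rotation fixing $z_0$, realized by $\left(\begin{smallmatrix}0 & -1\\ 1 & -1\end{smallmatrix}\right)$ up to conjugacy) together with convexity of horoball-distance along geodesics forces the max to occur at the fixed point $z_0$. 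Since the $\PSL(2,\bZ)$-translates of this region cover $\bH^2$ and $\cH$ is invariant, the global maximum is $\log(2/\sqrt 3)$. Alternatively, and perhaps more cleanly for the writeup, one can avoid the extremality argument entirely: cover $\bH^2$ by the (closed) horoballs of $\cH$ together with the translates of the single compact `gap' region, note $\mathrm{dist}(\cdot,\cH) = 0$ on the horoballs, and bound the diameter-type quantity on the gap region by the explicit computation above. I would present the explicit computation at $z_0$ as the heart of the proof and treat the covering/extremality as a short symmetry remark.
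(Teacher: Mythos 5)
The paper never actually proves this lemma---it is dismissed as ``an exercise in hyperbolic geometry''---so there is no proof of record to compare against; judged on its own, your argument is correct and is the natural one. The key computation checks out: the distance from $z=\tfrac12+iy$ to $H_\infty$ is $\log(1/y)$, the distance to $H_0$ (Euclidean diameter $1$, tangent at $0$) is $\log\bigl(\tfrac{1/4+y^2}{y}\bigr)$, these agree exactly at $y=\tfrac{\sqrt3}{2}$, i.e.\ at the fixed point $z_0=\tfrac12+\tfrac{\sqrt3}{2}i$ of the order-$3$ element $z\mapsto\tfrac{-1}{z-1}$ cyclically permuting $H_\infty,H_0,H_1$, and the common value is $\log\tfrac{2}{\sqrt3}$. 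The only step you wave at is the extremality/covering claim, and it is worth nailing down, since that is where the actual content lies. One clean way: the Farey ideal triangle with vertices $0,1,\infty$ is the union of the ``gap'' region with its intersections with the three horoballs (its three geodesic sides, minus the tangency points, lie inside $H_0\cup H_1\cup H_\infty$), and $\PSL(2,\bZ)$-translates of this triangle tile $\bH^2$, so it suffices to bound the distance on the gap. There, the equidistant locus between $H_\infty$ and $H_0$ (resp.\ $H_1$) is the geodesic $|z|=1$ (resp.\ $|z-1|=1$), the fixed set of the inversion interchanging the two horoballs, and these two geodesics meet at $z_0$; on the sector of the gap where $H_\infty$ is nearest one therefore has $|z|\ge1$ and $|z-1|\ge1$, which for $0\le\operatorname{Re}z\le1$ forces $\operatorname{Im}z\ge\tfrac{\sqrt3}{2}$ and hence distance at most $\log\tfrac{2}{\sqrt3}$; the other two sectors follow by the order-$3$ symmetry. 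With that detail made explicit, your proof is complete, and it establishes the sharper statement that $\log\tfrac{2}{\sqrt3}$ is attained exactly at the orbit of $z_0$.
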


There is a simple fundamental relationship between intersection numbers of curves on the torus and hyperbolic distance between the corresponding horoballs.

\begin{lemma}
\label{lem:i vs d}
We have 
$\ 
\displaystyle d_{\bH^2}\left(H_{p/q},H_{a/b} \right) = 
2 \log \iota\left( \frac pq , \frac ab \right) ~$
for any $\frac pq, \frac ab \in \cF$.   
\end{lemma}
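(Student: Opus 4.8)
The plan is to reduce, via the isometric action $\PSL(2,\bZ)\curvearrowright\bH^2$, to the case $\frac pq=\frac10=\infty$, and then carry out an explicit computation in the upper half-plane model.

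For the reduction, assume $\frac pq\neq\frac ab$ and choose $g\in\PSL(2,\bZ)$ with $g(p/q)=\infty$. Since $\cH$ is $\PSL(2,\bZ)$-invariant and $H_{1/0}$ is its unique element centered at $\infty$, we have $gH_{p/q}=H_{1/0}$ and $gH_{a/b}=H_{g(a/b)}$; moreover $\PSL(2,\bZ)$ preserves the intersection form, so $\iota(p/q,a/b)=\iota(1/0,g(a/b))$. Writing $g(a/b)=r/s$ in lowest terms (which is again a reduced fraction, as $\PSL(2,\bZ)$ permutes primitive integer vectors), we have $s\neq 0$ and $\iota(1/0,r/s)=|1\cdot s-0\cdot r|=|s|$. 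As $g$ acts on $\bH^2$ by isometry, it therefore suffices to prove $d_{\bH^2}(H_{1/0},H_{r/s})=2\log|s|$.

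For this I would invoke the standard normalization of $\cH$: in coordinates for which $H_{1/0}=\{(x,y):y\geq1\}$, the horoball $H_{a/b}$ is the Euclidean disk tangent to $\bR$ at $a/b$ of Euclidean diameter $1/b^2$ — that is, the $\{H_{p/q}\}$ are the classical Ford circles. (That the height of $H_{1/0}$ is exactly $1$ is forced by the packing condition: $H_{0/1}$ and $H_{1/1}$ are each tangent to $H_{1/0}$, hence Euclidean disks tangent to $\bR$ at $0$ and $1$ of diameter equal to that height, and they are tangent to one another, which pins the height to $1$; Farey addition then propagates $\mathrm{diam}(H_{a/b})=1/b^2$ to all slopes.) The geodesic joining the ideal points $\infty$ and $r/s$ is the vertical line $x=r/s$; it meets $\partial H_{1/0}$ at height $1$ and $\partial H_{r/s}$ at height $1/s^2$, orthogonally in both cases, so by convexity of horoballs this segment realizes the distance between the two horoballs (which are disjoint when $|s|\geq2$ and tangent when $|s|=1$). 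Therefore
\[
d_{\bH^2}(H_{1/0},H_{r/s})=\int_{1/s^2}^{1}\frac{dy}{y}=\log(s^2)=2\log|s|,
\]
which finishes the proof.

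Almost everything here is routine; the one point that deserves care is the normalization that makes the multiplicative constant exactly $2$ — equivalently, the fact that $\cH$ is the lift of the \emph{maximal} cusp neighborhood of $\bH^2/\PSL(2,\bZ)$, so that it is a tangency packing and coincides with the Ford circle configuration. I expect this bookkeeping to be the only genuine obstacle, and it is a minor one.
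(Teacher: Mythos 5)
Your proposal is correct and follows essentially the same route as the paper: reduce by the $\PSL(2,\bZ)$-action to the case $p/q=\infty$, use the Ford-circle description of $H_{a/b}$ (which the paper simply cites, while you justify it from the maximal packing), and compute the distance along the vertical geodesic to get $2\log b$.
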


\begin{proof}
Applying an element of $\PSL(2,\bZ)$, we may assume that $p/q=\infty$ in the upper half-plane model for $\bH^2 \, \cup \, \partial_\infty \bH^2$. 
The horosphere $H_{a/b}$ is given by $\{z: \left| z-(\frac1b + \frac i{2b^2}) \right| = \frac1{2b^2}\}$ (see e.g.~\cite{Athreya}), so 
\[
d_{\bH^2}\left(H_{p/q},H_{a/b} \right) =
d_{\bH^2}\left( a/b + \frac i{b^2} , a/b + i\right) = 2\log b = 2 \log \iota\left( \frac pq , \frac ab \right) ~. \qedhere
\]
\end{proof}

\subsection{Width and height for horoballs}\label{sec:width/height}
The interior of each edge of $\tau^*$ is incident to exactly two horoball regions. 
Thus, for any choice of horoball $H$ in $\tau^*$, there are exactly two other horoball regions, distinct from $H$, that are incident to the interiors of the extreme edges of $H$. Call these $H_1$ and $H_2$.

\begin{definition}
\label{def:horoball height width}
The \emph{width of $\tau$ relative to $H$} is $w = \iota(H_1,H_2)$. The \emph{height of $\tau$ relative to $H$} is 
\[
\mathrm{ht}(\tau,H) = 
\max\, \{ \ \iota(H, H') \ : H' \text{ is a horoball of }\tau \ \}~.
\]
\end{definition}

For the remainder of this article, we will suppress the difference between the triangulation $\tau\in\cA_n$ and its dual tree $\tau^*$. 
The translation between them is quite natural, and the difference can henceforth be understood from context.

\subsection{The two kappas}
Recall from the introduction the quantity $\kappa_T(n)$, which is the minimum, taken over collections $\Gamma$ of $n$ curves on $S$, of the maximum pairwise intersection number of curves in $\Gamma$.
\begin{definition}[`Max Intersection Function']
\label{def:kappa}
The function $\kappa:\cA_n\to \bN$ is defined by
\[
\kappa(\tau) = \max\{\ \iota(H_1,H_2) : H_1 \text{ and }H_2 \text{ are horoballs of }\tau \ \}~.
\]
\end{definition}
\noindent As noted in the introduction, we claim that $\kappa_T(n)= \min \{ \kappa(\tau): \tau\in \cA_n \}$.

That $\kappa_T(n)\le \min \kappa$ is easy: for any $\tau\in\cA_n$, choose a Farey labelling. 
The quantity $\kappa(\tau)$ is equal to the maximum pairwise intersection number of the $n$ slopes obtained in this Farey labelling, and $\kappa_T(n)$ is the minimum of the maximum pairwise intersection of any $n$ slopes, so $\kappa_T(n) \le \kappa(\tau)$ for each $\tau\in\cA_n$. 

The reverse inequality is slightly less obvious, and relies on a certain convexity of maximal $k$-systems in $\partial_\infty \bH^2$. 
The following lemma makes this precise.\footnote{We are grateful to Ian Agol for sharing an unpublished note with us which contained \autoref{lem: convexity} \cite{Agol-unpub}.}

\begin{lemma}
\label{lem: convexity}
If $\Gamma$ is a $k$-system on $T$ which is maximal with respect to inclusion among $k$-systems, then $\cF$ induces a triangulation of the $n$-gon which forms the convex hull of $\Gamma\subset \partial_\infty \bH^2$.
\end{lemma}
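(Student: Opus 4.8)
The plan is to argue that a maximal $k$-system $\Gamma$, viewed as a subset of $\partial_\infty \bH^2 \approx \bR \cup \{\infty\}$ via the slope correspondence, has the property that the Farey edges between its members, together with the chords of its convex hull, assemble into a triangulation of the $|\Gamma|$-gon with vertex set $\Gamma$. The key geometric input is that Ford circles centered at distinct rationals are either tangent or disjoint, and their interiors are always disjoint; in $\cF$-language, two slopes span an edge exactly when their horoballs are tangent, i.e.\ when $\iota = 1$. So I first want to understand which pairs of curves in $\Gamma$ can be joined by a Farey edge, and show these edges are pairwise non-crossing in $\bH^2$.

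First I would establish the \emph{non-crossing} property: if $\frac pq, \frac ab$ and $\frac rs, \frac cd$ are two pairs of slopes with $\iota(\frac pq, \frac ab) = \iota(\frac rs, \frac cd) = 1$, then the corresponding geodesics in $\bH^2$ either coincide, share an endpoint, or are disjoint — they never cross transversally. This is a standard fact about the Farey tessellation (the geodesics bounding ideal triangles of $\cF$ tile $\bH^2$), and it can be seen directly from the fact that an edge of $\cF$ separates $\partial_\infty\bH^2$ into two arcs, each of which is $\PSL(2,\bZ)$-conjugate to the standard picture where one side contains only rationals $>0$ and the other only rationals $<0$ relative to an edge; any second Farey edge has both endpoints on one side. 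Consequently the Farey edges among points of $\Gamma$ form a non-crossing chord set in the convex polygon $P$ spanned by $\Gamma$.

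Next I would use \emph{maximality} of $\Gamma$ to show this non-crossing chord set is in fact a full triangulation of $P$. Suppose some complementary region $R$ of $P$ cut out by the Farey chords is not a triangle; then $R$ has at least four vertices $v_1, v_2, v_3, v_4$ in cyclic order among $\Gamma$, and no Farey edge of $\cF$ joins any two non-adjacent-in-$R$ pair. I want to produce a new slope $\gamma$, not in $\Gamma$, lying in the arc of $\partial_\infty\bH^2$ cut off by the chord $v_1v_3$ (or inside $R$), such that $\iota(\gamma, v) \le k$ for every $v \in \Gamma$, contradicting maximality. The natural candidate is a vertex of $\cF$ lying in the ideal triangle of the Farey tessellation that sits inside $R$ adjacent to an edge of $R$: Farey addition of the endpoints of a bounding chord of $R$ produces a slope whose horoball is tangent to both, and which is "closest" to the rest of $\Gamma$; a monotonicity argument (via Lemma~\ref{lem:monotonicity}, or equivalently Lemma~\ref{lem:i vs d} comparing hyperbolic distances: $\gamma$'s horoball, lying inside the hyperbolic convex hull of $\Gamma$'s horoballs, is no farther from any $H_v$ than the diameter of that hull, which is $2\log k$) shows $\iota(\gamma, v) \le \iota(v_i, v_j)$ for appropriate boundary vertices $v_i, v_j$ of $R$, hence $\le k$. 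This contradicts maximality, so every complementary region is a triangle and we get the claimed triangulation.

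The main obstacle I expect is the last step: carefully choosing the new slope $\gamma$ and verifying $\iota(\gamma, v) \le k$ for \emph{all} $v \in \Gamma$ simultaneously, not just for the vertices of the offending region $R$. The clean way is probably the hyperbolic-distance reformulation: let $K$ be the hyperbolic convex hull of the horoballs $\{H_v : v\in\Gamma\}$; by Lemma~\ref{lem:i vs d} its "horoball diameter" is $2\log k$. If $R$ is a non-triangular region, the Ford circle $H_\gamma$ for a suitable Farey neighbor $\gamma$ sits inside $K$ and is tangent to two sides of $R$, so $d_{\bH^2}(H_\gamma, H_v) \le 2\log k$ for every $v$ by a convexity/betweenness estimate along the tree $\tau^*$, giving $\iota(\gamma,v)\le k$ — and $\gamma \notin \Gamma$ since it lies strictly inside a complementary region. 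I would want to make the "$H_\gamma$ lies inside $K$" and the betweenness estimate precise, likely by passing to the dual tree and invoking monotonicity of intersection numbers along non-backtracking paths, which is exactly the content already set up in \S\ref{subsec:monotonicity}.
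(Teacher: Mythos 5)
Your overall strategy (non-crossing Farey chords inside the convex hull, plus maximality forcing every complementary region to be a Farey triangle by exhibiting a new slope $\gamma$ with $\iota(\gamma,v)\le k$ for all $v\in\Gamma$) is in the same spirit as the paper, but the step you yourself flag as the main obstacle is not closed by what you propose, and this is a genuine gap. Your first suggestion compares $\iota(\gamma,v)$ with $\iota(v_i,v_j)$ for boundary vertices $v_i,v_j$ of the offending region $R$; that is the wrong comparison for an arbitrary $v\in\Gamma$ far from $R$ (and, as you note, it only addresses vertices of $R$). Your fallback via hyperbolic distance cannot work as stated, for two reasons. First, the quantity you need is exact: maximality only yields a contradiction if $\iota(\gamma,v)\le k$ on the nose, and any $\delta$-hyperbolicity ``betweenness'' estimate carries additive errors in distance which, after exponentiating through \autoref{lem:i vs d}, become multiplicative constants $>1$, giving only $\iota(\gamma,v)\le Ck$. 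Second, the claim that $H_\gamma\subset K$ forces $d_{\bH^2}(H_\gamma,H_v)\le 2\log k$ does not follow from a diameter-of-convex-hull argument, since horoballs are unbounded sets: the relevant quantities are distances \emph{between} horoballs, and the CAT(0) convexity argument for diameters does not transfer. There is also a set-up issue: a side of your region $R$ need not be a Farey edge (a priori no two elements of $\Gamma$ need intersect exactly once), so ``Farey addition of the endpoints of a bounding chord of $R$'' is not always defined.

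The paper closes exactly this gap with a cleaner combinatorial argument that you gesture at but do not set up: fix any $\alpha,\beta\in\Gamma$ and any Farey triangle $\Delta$ meeting the geodesic $g_{\alpha\beta}$, and let $\delta$ be a vertex of $\Delta$. \autoref{lem:monotonicity} gives $\iota(\delta,\alpha)\le\iota(\alpha,\beta)\le k$ and $\iota(\delta,\beta)\le k$, and for any other $\gamma\in\Gamma$ one of the geodesics $g_{\alpha\gamma}$, $g_{\beta\gamma}$ also meets $\Delta$, so monotonicity applied again gives $\iota(\delta,\gamma)\le\iota(\alpha,\gamma)\le k$ or $\iota(\delta,\gamma)\le\iota(\beta,\gamma)\le k$. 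Maximality then forces $\delta\in\Gamma$, and since this holds for every Farey triangle crossing a geodesic between points of $\Gamma$, the convex hull is a union of Farey triangles with vertices in $\Gamma$. Note how the bound against the \emph{distant} element is always of the form $\iota(\delta,\gamma)\le\iota(\alpha,\gamma)$ with $\alpha\in\Gamma$, which is what your argument needs and does not supply; if you replace your hyperbolic fallback with this comparison (and start from an arbitrary Farey triangle crossing $g_{\alpha\beta}$ rather than from a Farey edge on $\partial R$), your proof goes through.
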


\begin{proof}
Let $g_{ab}$ indicate the geodesic in $\bH^2$ with endpoints $a,b\in\partial_\infty \bH^2$. 
Suppose that $\alpha,\beta\in\Gamma\subset \cF$, that $\Delta$ is a Farey triangle intersecting $g_{\alpha\beta}$, and that $\delta$ is a vertex of $\Delta$ (and, hence, of $\cF$). 
\autoref{lem:monotonicity} implies that both $\iota(\delta,\alpha)$ and $\iota(\delta,\beta)$ are at most $\iota(\alpha,\beta)$, which is at most $k$ by assumption.
For any $\gamma\in\Gamma$, either $g_{\alpha\gamma}$ or $g_{\beta\gamma}$ intersect $\Delta$, so it follows that $\iota(\gamma,\delta)\le k$ as well. 
Maximality of $\Gamma$ implies that $\delta\in\Gamma$, so the convex hull of $\Gamma$ is equal to the union of Farey triangles spanning elements of $\Gamma$.
\end{proof}

This demonstrates that $\kappa_T(n) \ge \min \kappa$, so we may conclude:

\begin{proposition}
\label{prop:kappa=kappa}
We have $\displaystyle \kappa_T(n) = \min_{\tau\in\cA_n} \kappa$.
\end{proposition}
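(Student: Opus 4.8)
The plan is to establish the two inequalities separately: $\kappa_T(n)\le\min_{\tau\in\cA_n}\kappa$ is immediate from Farey labellings, while the reverse rests on \autoref{lem: convexity}, with a short reduction step needed to reconcile the two polygon sizes.

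For $\kappa_T(n)\le\min_{\tau\in\cA_n}\kappa$, I would fix $\tau\in\cA_n$ attaining the minimum and choose a Farey labelling of $\tau$. This produces $n$ distinct slopes whose pairwise intersection numbers are exactly the horoball intersection numbers of $\tau$, hence all at most $\kappa(\tau)$; so these $n$ curves form a $\kappa(\tau)$-system and, by definition of $\kappa_T$, we get $\kappa_T(n)\le\kappa(\tau)=\min_{\tau\in\cA_n}\kappa$.

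For the reverse inequality, set $k=\kappa_T(n)$, so there is a $k$-system of size $n$; enlarge it to a $k$-system $\Gamma$ that is maximal with respect to inclusion. Such a $\Gamma$ is finite (for instance by Agol's bound $\eta_T(k)\le p+1$, where $p$ is the least prime exceeding $k$, or by the elementary remark that a $k$-system containing $1/0$ has all other slopes of the form $a/b$ with $|b|\le k$, of which only finitely many are mutually compatible). Write $m=|\Gamma|\ge n$. By \autoref{lem: convexity}, $\cF$ induces a triangulation $\tau'$ of the convex $m$-gon $\mathrm{conv}(\Gamma)\subset\partial_\infty\bH^2$, giving $\tau'\in\cA_m$, and a Farey labelling of $\tau'$ has its horoballs labelled precisely by the slopes of $\Gamma$; since these pairwise intersect at most $k$ times, $\kappa(\tau')\le k$. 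It then remains to descend from $\cA_m$ to $\cA_n$ without increasing $\kappa$: any triangulated polygon with at least four vertices contains an \emph{ear}, a triangle two of whose edges lie on the boundary, and deleting the ear together with its tip vertex yields a triangulation of a polygon with one fewer vertex. Restricting the Farey labelling of $\tau'$ to this smaller triangulation is again a Farey labelling of it (its dual tree is obtained from $(\tau')^*$ by pruning a leaf, which does not alter the Farey labels of the surviving horoballs), so its horoball intersection numbers form a subset of the multiset of those of $\tau'$, and $\kappa$ does not increase. Removing $m-n$ ears in succession produces $\tau\in\cA_n$ with $\kappa(\tau)\le\kappa(\tau')\le k$, whence $\min_{\tau\in\cA_n}\kappa\le k=\kappa_T(n)$. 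Combining the two inequalities yields the proposition.

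The only steps demanding any care are bookkeeping ones: that maximal $k$-systems are finite (so the completion $\Gamma$ makes sense), and that ear removal genuinely transports Farey labellings, so that horoball intersection numbers are inherited and $\kappa$ is monotone under passing to a sub-triangulation. The substantive geometric input — the convexity of maximal $k$-systems in $\partial_\infty\bH^2$ — is already supplied by \autoref{lem: convexity}, so I do not expect a real obstacle here.
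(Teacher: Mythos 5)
Your proof is correct and takes essentially the same route as the paper: the easy inequality $\kappa_T(n)\le\min\kappa$ via a Farey labelling of an arbitrary $\tau\in\cA_n$, and the reverse inequality via \autoref{lem: convexity} applied to a maximal $k$-system. The only difference is that you make explicit the bookkeeping the paper leaves implicit --- finiteness of maximal $k$-systems (so the completion exists) and the ear-pruning descent from $\cA_m$ to $\cA_n$, which indeed cannot increase $\kappa$ since the surviving vertices retain their Farey slopes --- and both of these steps are carried out correctly.
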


\section{Illustrative examples}
\label{sec:examples}

There are several natural elements of $\cA_n$ that we can use to observe $\kappa:\cA_n\to \bN$.

\begin{remark}
\label{rem:unlabelled}
Technically, the vertices of the associahedron correspond to triangulations of a \emph{labelled} convex polygon.
Notice however that the max intersection function $\kappa:\cA_n\to\bN$ is invariant under permutation of labels, so we can safely refer to $\kappa(\tau)$ for elements $\tau$ of $\cA_n$ without reference to a particular ordering of the horoballs of $\tau$.
\end{remark}

\begin{itemize}

\item The element $\ch(n)\in\cA_n$ (for `chain') contains a horoball of width $n$. We have $\kappa(\ch(n))=n$, and the height relative to the horoball of width $n$ is $1$.
\\

\item The element $\ach(n)\in\cA_n$ (for `alternating chain') contains a path of length $n-3$ that switches direction $n-4$ times. Here we have $\kappa(\ach(n))=F_n$, the $n$th Fibonacci number, the largest width horoball of $\ach(n)$ is $3$, and the height relative to any horoball is at least $F_{\lfloor \frac n2\rfloor }$.
\\

\item The element $\reg(r)\in\cA_n$ (for `regular'), with $n=3\cdot 2^{r-1}$, is formed by choosing the subtree of the homogeneous (infinite) trivalent tree that is induced on all vertices at combinatorial distance at most $r$ from a fixed vertex. The tree $\reg(r)$ contains the alternating chain $\ach(2r+1)$ as a subtree, and in fact we have $\kappa(\reg(r))=\kappa(\ach(2r+1))=F_{2r+1}$. 
The largest width of a horoball of $\reg(r)$ is given by $2r-1$, and the height of $\reg(r)$ relative to this horoball is $F_{r+1}$.
\\

\item The element $\Far(h)\in\cA_n$ (for the `Farey series'), with $n=2+\sum_{k\le h} \phi(k)$, is the subgraph of $\cF$ induced on fractions in $\bQ\cap [0,1]$ that can be written with denominator $\le h$, together with $1/0$. 
Observe that $\kappa(\Far(h)) =h^2-2h$, the largest width of a horoball of $\Far(h)$ is given by $h$, while the height relative to this horoball is given by $h-1$. \\

\end{itemize}

We collect this information in \autoref{pic:elements} and \autoref{table}.

\begin{figure}
\centering
\vspace{1.5cm}
\begin{minipage}[t][3cm][t]{.2\textwidth}
\centering
\vspace{-.7cm}
\includegraphics[width=4cm]{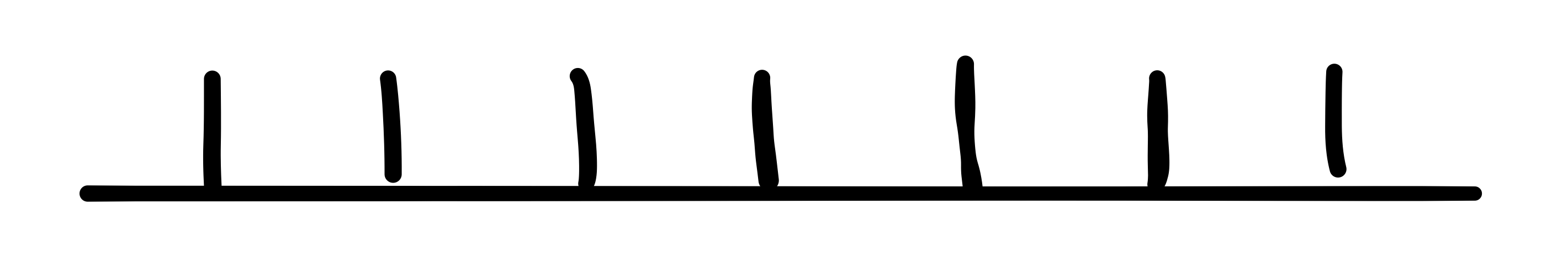}
\vspace{1.5cm}
\subcaption{$\ch(n)$}
\label{pic:chain}
\end{minipage}\hfill
\begin{minipage}[t][3cm][t]{.2\textwidth}
\centering
\includegraphics[width=4cm]{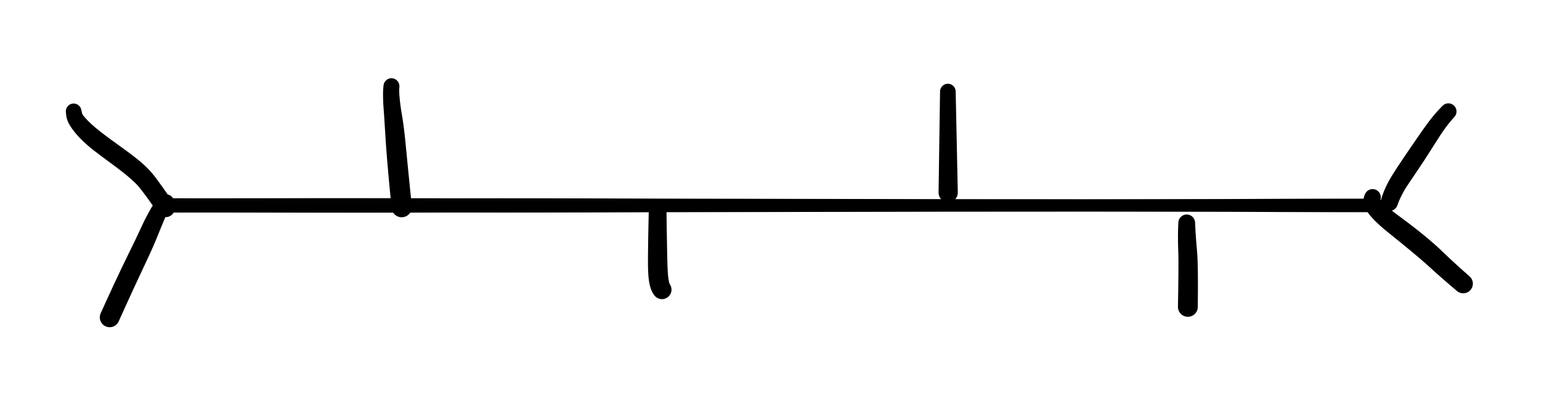}
\vspace{1.5cm}
\subcaption{$\ach(n)$}
\label{pic:achain}
\end{minipage}\hfill
\begin{minipage}[t][3cm][t]{.2\textwidth}
\centering
\vspace{-2cm}
\includegraphics[width=4cm]{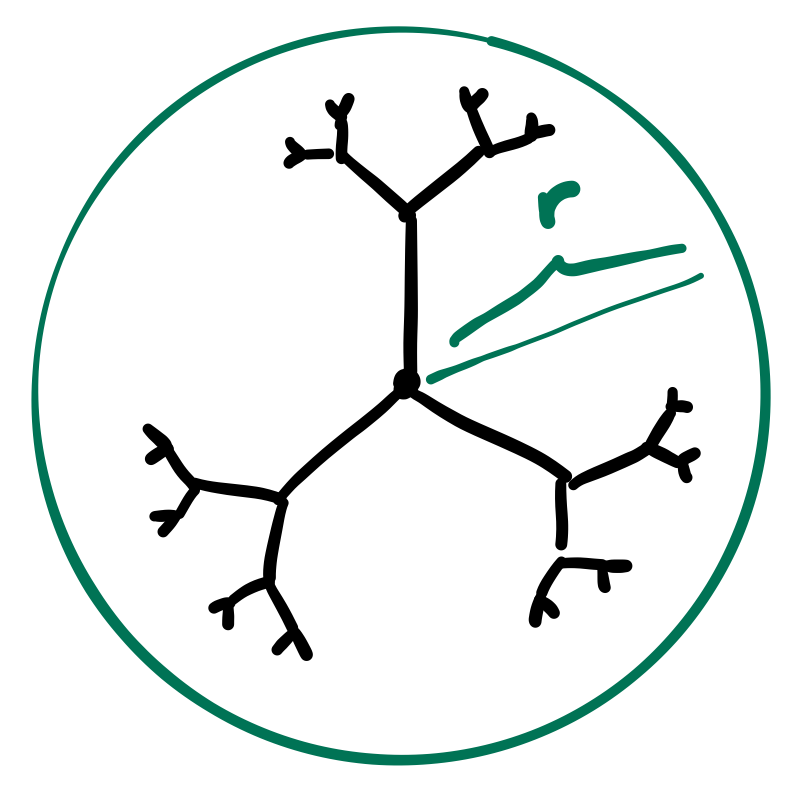}
\subcaption{$\reg(r)$}
\label{pic:chain}
\end{minipage}\hfill
\begin{minipage}[t][3cm][t]{.2\textwidth}
\vspace{-1.4cm}
\centering
\includegraphics[width=4cm]{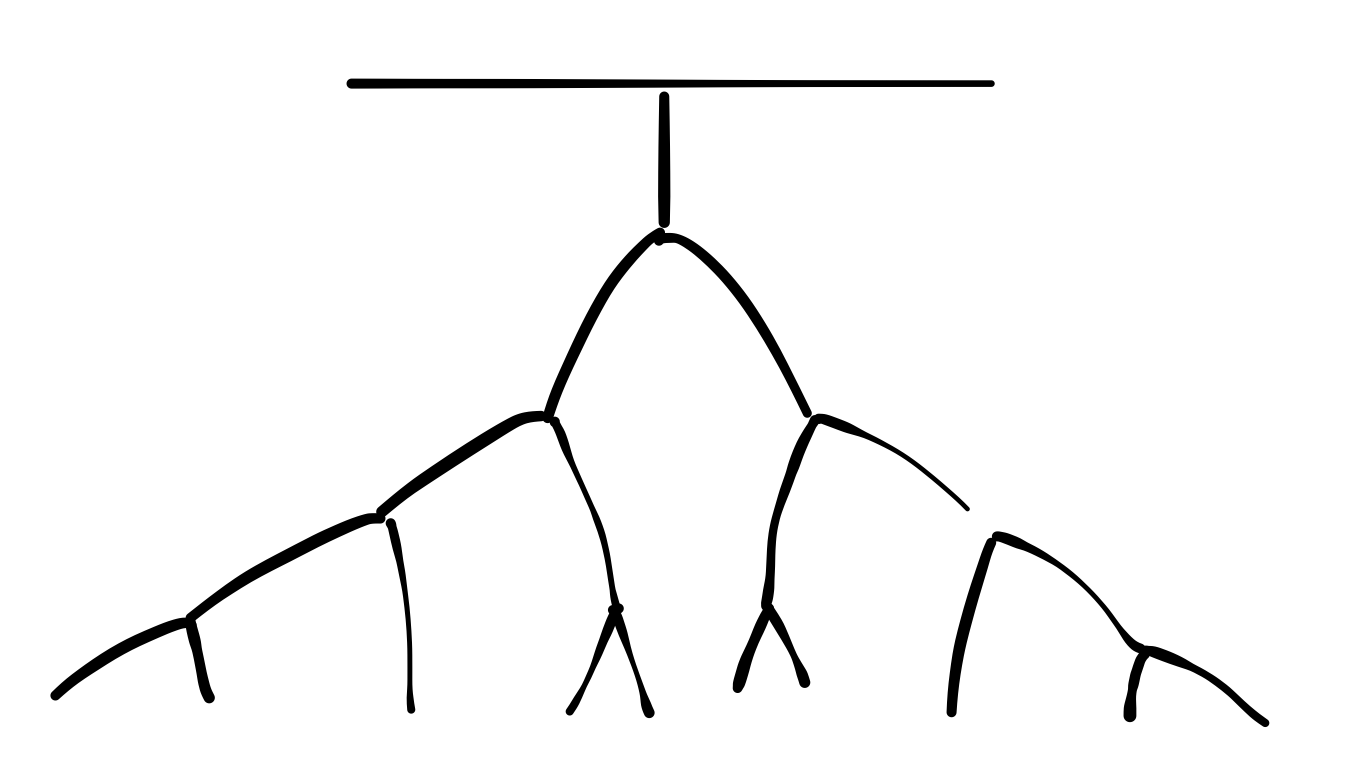}
\vspace{.6cm}
\subcaption{$\Far(h)$}
\label{pic:chain}
\end{minipage}\hfill
\caption{Several dual trees of elements in $\cA_n$.}
\label{pic:elements}
\end{figure}

\renewcommand{\arraystretch}{1.5}
\begin{table}[]
\begin{tabular}
{|
>{\centering}p{.1\textwidth} |
>{\centering}p{0.2\textwidth} |
>{\centering}p{0.3\textwidth} |
>{\centering\arraybackslash}p{0.2\textwidth} |
}
\hline
$\tau$ & $\kappa(\tau)$ & Largest width horoball $H$ & $\mathrm{ht}(\tau,H)$ \\
\hline 
\hline
$\ch(n)$ & $n$ & $n-2$ & $1$  \\
\hline 
$\ach(n)$ & $ \Phi^n$ & $3$ & $\ge \Phi^{n/2}$  \\
\hline  
$\reg(r)$ & $ n^{2\log_2\Phi}$ & $2\log_2n $ & $ n^{\log_2\Phi}$  \\
\hline  
$\Far(h)$ & $ \frac{\pi^2}3 n $ & $\sqrt n$  &  $\sqrt n$ \\
\hline
\end{tabular}
\vspace{.5cm}
\caption{Some data for attractive elements of $\cA_n$.  
Some entries include only leading-order terms, ignoring multiplicative constants. 
Note that $\Phi = \frac{1+\sqrt 5}2$.
}
\label{table} 
\end{table}

\begin{remark}
\label{rem:An diameter}
Observe the large difference between $\kappa(\ach(n))\approx (\frac{1+\sqrt5}2)^n$ and $\kappa(\ch(n))=n$. 
Though we have not discussed it, there is a natural simplicial structure on $\cA_n$, with edges between triangulations of an $n$-gon that differ by a single diagonal flip. 
It is not hard to see that the diameter of $\cA_n$ is at most $2n$ (in fact, this quantity can be determined precisely \cite{STT, Pournin}), so it follows that the change in $\kappa$ across an edge of $\cA_n$ can be arbitrarily large. 
\end{remark}

\section{Estimating kappa using heights}
\label{sec:estimating kappa}

Let $\tau\in\cA_n$. 
The strategy to obtain a lower bound for $\kappa(\tau)$ is to find a set of pairwise intersection functions $\{I_\alpha\}$ for $\tau$, and estimate the convex combination
\begin{equation}
\label{eq:goal}
\sum_\alpha r_\alpha I_\alpha \ge n - \epsilon(n)~,
\end{equation}
for some set of non-negative weights $\{r_\alpha\}$ with $\sum r_\alpha =1$, and error term $\epsilon(n)$.
Of course, we have $\kappa(\tau)\ge I_\alpha$ for all $\alpha$, and by convexity we must have $I_\alpha \ge n-\epsilon(n)$ for some $\alpha$.

We will make use of three facts from classical analytic number theory, which we group together in a single lemma for convenience.
Below we indicate the interval $\{1,\ldots,m\}$ by $[m]$ and the subset of $\{1,\ldots,m\}$ relatively prime to $s$ by $[m]_s$, e.g.~Euler's totient function is $\phi(m) = \#[m]_m.$ 
The number of divisors of $n$ is indicated by $d(n)$.

\begin{lemma}
\label{lem:number theory}
We have the following estimates:
\begin{align}
\label{eq:euler count}
\#[m]_n & = \frac mn \phi(n) + O\left(d(n)\right)~, \\
\label{eq:dirichlet}
\sum_{k\le h} d(k)& = h \log h + O(h)~, \text{ and }\\
\label{eq:walfisz}
\sum_{k\le h} \frac {\phi(k)}k &= O(h)~.
\end{align}
\end{lemma}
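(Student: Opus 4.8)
The plan is to establish each of the three estimates \eqref{eq:euler count}, \eqref{eq:dirichlet}, and \eqref{eq:walfisz} separately, since they are independent classical facts, citing standard references where the computation would be lengthy.

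For \eqref{eq:euler count}, I would proceed by inclusion-exclusion over the prime divisors of $n$. Writing $n = p_1^{e_1}\cdots p_r^{e_r}$, the number of integers in $[m]$ divisible by a squarefree product $d \mid n$ of distinct primes is $\lfloor m/d\rfloor = m/d + O(1)$, so
\[
\#[m]_n = \sum_{d \mid n,\ d \text{ squarefree}} \mu(d)\left(\frac md + O(1)\right) = m\prod_{p\mid n}\left(1 - \frac1p\right) + O\!\left(\sum_{d\mid n} 1\right) = \frac mn\phi(n) + O(d(n)),
\]
using $\phi(n)/n = \prod_{p\mid n}(1-1/p)$ and the fact that the number of squarefree divisors of $n$ is at most $d(n)$. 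This is the one I would write out, as it is short and self-contained.

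For \eqref{eq:dirichlet}, this is precisely the classical Dirichlet divisor estimate: $\sum_{k\le h} d(k) = \sum_{k\le h}\lfloor h/k\rfloor = h\sum_{k\le h}1/k + O(h) = h\log h + O(h)$, using $\sum_{k\le h}1/k = \log h + O(1)$. I would give this one-line hyperbola-method argument and optionally cite a standard analytic number theory text (e.g.\ Apostol or Hardy--Wright). For \eqref{eq:walfisz}, one uses $\phi(k)/k = \sum_{d\mid k}\mu(d)/d$, so
\[
\sum_{k\le h}\frac{\phi(k)}k = \sum_{k\le h}\sum_{d\mid k}\frac{\mu(d)}d = \sum_{d\le h}\frac{\mu(d)}d\left\lfloor\frac hd\right\rfloor = h\sum_{d\le h}\frac{\mu(d)}{d^2} + O\!\left(\sum_{d\le h}\frac1d\right) = O(h),
\]
where the main term is $O(h)$ because $\sum_d \mu(d)/d^2$ converges (to $1/\zeta(2) = 6/\pi^2$), and the error is $O(\log h) = O(h)$. (In fact this shows the sum is $\tfrac{6}{\pi^2}h + O(\log h)$, but only the $O(h)$ bound is needed.)

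There is no serious obstacle here — all three are textbook facts, and the only mild care needed is bookkeeping of error terms. The one point worth stating cleanly is the bound "number of squarefree divisors of $n$ is $\le d(n)$" used in \eqref{eq:euler count}; everything else is elementary manipulation of floor functions and convergent Dirichlet series.
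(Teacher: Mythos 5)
Your three arguments are all correct, but note that the paper does not actually prove this lemma at all: it simply cites references (Cohen for \eqref{eq:euler count} via M\"obius inversion, Apostol for the Dirichlet divisor bound \eqref{eq:dirichlet}, and Walfisz for a more precise form of \eqref{eq:walfisz}), together with a remark that the error in \eqref{eq:euler count} is really $O(\vartheta(n))$ with $\vartheta(n)$ the number of squarefree divisors --- exactly the refinement your inclusion--exclusion makes visible, since the error you accumulate is $O(2^{\omega(n)})\le O(d(n))$. So what you have written is a self-contained substitute for those citations, and it buys the reader independence from the references at the cost of a page of standard manipulations; the paper's choice buys brevity and, in the case of Walfisz, far more precision than is ever used. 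Two small points: the computation you give for \eqref{eq:dirichlet} is the elementary divisor-swap $\sum_{k\le h}d(k)=\sum_{a\le h}\lfloor h/a\rfloor$, not the hyperbola method (the latter is what improves the error to $O(\sqrt h)$, which is not needed here), so the label is a misnomer though the estimate is fine; and for \eqref{eq:walfisz} you can skip the M\"obius identity entirely, since $\phi(k)/k\le 1$ gives $\sum_{k\le h}\phi(k)/k\le h$ in one line --- your finer asymptotic $\tfrac{6}{\pi^2}h+O(\log h)$ is correct but unnecessary for the lemma as stated.
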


The estimate \eqref{eq:euler count} is a standard application of M\"obius inversion \cite[Lem.~3.4]{Cohen}. 
The second estimate \eqref{eq:dirichlet} is a weaker version of a famous theorem of Dirichlet \cite[Ch.~3]{Apostol}, and a more precise form of \eqref{eq:walfisz} can be found in \cite{Walfisz}.
(Note that the error term in \eqref{eq:euler count} is in fact $O(\vartheta(n))$, where $\vartheta(n)$ is the number of square-free divisors of $n$. 
However, in the sum $\sum_{j\le h} \vartheta(j)$, one finds the same order of growth as $\sum_{j\le h} d(j)$ \cite{Mertens}, so in our application, \autoref{lem:count n}, this improvement is immaterial.)

In this section we will show:

\begin{proposition}
\label{prop: control with height}
Let $H$ be a horoball of $\tau\in\cA_n$, and let $h = \mathrm{ht}(\tau,H)$. 
There is a constant $C>0$ so that we have $\kappa(\tau) \ge n - C h \log h$.
\end{proposition}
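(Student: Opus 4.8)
\emph{Proof proposal.} The plan is to prove the equivalent upper bound on the number of curves, $n \le \kappa(\tau) + Ch\log h$, by exhibiting an explicit convex combination of pairwise intersection numbers of $\tau$ whose value is at least $n - Ch\log h$; since each pairwise intersection number is at most $\kappa(\tau)$, convexity then gives the claim in one line.

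First I would normalize. Applying an element of $\PSL(2,\bZ)$ to a Farey labelling, assume $H = H_{1/0}$, so that $\mathrm{ht}(\tau,H)=h$ means every horoball of $\tau$ other than $H$ carries a label with denominator at most $h$. Since the link of $1/0$ in the convex triangulation $\tau$ is a path and the neighbours of $1/0$ in $\cF$ are exactly the integers, after a translation the neighbours of $1/0$ in $\tau$ are $0,1,\dots,w$, where $w$ is the width of $\tau$ relative to $H$; in particular $\kappa(\tau) \ge \iota(0/1,w/1) = w$. The remaining $n-w-2$ horoballs are distributed among the unit intervals $(j,j+1)$, $0 \le j \le w-1$: those in $(j,j+1)$, together with $j$ and $j+1$, span a convex sub-triangulation $\tau_j$ with $m_j$ interior vertices, $\sum_j m_j = n-w-2$. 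Two elementary identities power the argument: for a slope $p/q \in (j,j+1)$ one has $\iota(p/q,0/1)=p$ and $\iota(p/q,w/1)=wq-p$, hence $\iota(p/q,0/1)+\iota(p/q,w/1)=wq\le wh$; and $\iota(j/1,p/q)=p-jq<q\le h$, so the height of each $\tau_j$ relative to its horoball at $j$ is again at most $h$.

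Next I would assemble the convex combination. The single term $\iota(0/1,w/1)=w$ accounts for the $w$ horoballs forming the fan at $1/0$. Each $\tau_j$ is a strictly smaller convex triangulation of height $\le h$ relative to a horoball, so, proceeding by induction on $n$ (the decomposition being recursive), it carries a convex combination of its own pairwise intersection numbers of value $\ge (m_j+2) - Ch\log h$, and these are pairwise intersection numbers of $\tau$ as well. Splicing the fan term to these $w$ inner combinations — with weights proportional to the numbers of horoballs involved, using the bridging identity $\iota(p/q,0/1)+\iota(p/q,w/1)=wq$ together with the analogous estimates for intersections of slopes lying in different intervals to move between pieces without loss — produces a convex combination of value $\ge w + \sum_j (m_j+2) - (\text{total error}) = n - (\text{total error})$. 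The naive total error, $\sum_j Ch\log h$, is $O(wh\log h)$, which is far too big; the content of the proof (packaged as \autoref{lem:count n}) is that the error charged to $\tau_j$ is really of the form $\sum_{q\le Q_j} d(q)$, with $Q_j$ the largest denominator occurring in $\tau_j$ — here \eqref{eq:euler count} enters, bounding how many slopes of denominator $q$ a convex piece can hold inside $[0,w]$ — and that, since the slopes of each fixed denominator $q$ available inside $[0,w]$ are limited, these per-interval errors reorganize into a single sum $\lesssim \sum_{q\le h} d(q) = h\log h + O(h)$ via \eqref{eq:dirichlet}, with \eqref{eq:walfisz} absorbing a lower-order contribution.

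The main obstacle is exactly this bookkeeping: arranging the combination and its weights so that the errors from the (possibly $\Theta(w)$ many, with $w$ potentially of order $n/h^2$) sub-triangulations telescope into one $O(h\log h)$ term rather than accumulating a factor of $w$. By contrast the normalization, the fan decomposition, and the two identities above are routine, and the final step is a one-line convexity argument; the weight of the proof sits in \autoref{lem:count n} and in checking that the spliced object genuinely remains a convex combination (non-negative weights summing to $1$) of value $n - O(h\log h)$.
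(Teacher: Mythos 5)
Your setup is sound and matches the paper's normalization in spirit: taking $H=H_{1/0}$, reading heights as denominators, noting the neighbours of $1/0$ form the fan $0,1,\dots,w$, and invoking \eqref{eq:euler count}--\eqref{eq:walfisz} are all consistent with what the paper does (its ``branches'' $B(j)$ are exactly your sub-triangulations $\tau_j$). But the core of the argument is missing. Your splice-by-induction cannot work as described: if each $\tau_j$ only certifies a convex combination of value $\ge (m_j+2)-Ch\log h$, then any convex combination of these certificates with weights $\lambda_j$ summing to $1$ is bounded above by a weighted \emph{average} of the $(m_j+2)$ (with your weights $\lambda_j\propto m_j+2$, roughly $\sum_j(m_j+2)^2/\sum_j(m_j+2)$), which is far below $n$ whenever there are many intervals. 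To reach $n-O(h\log h)$ you must use intersection numbers between horoballs lying in widely separated intervals, since these are the only quantities large enough: they grow like $kk'$ times the index distance between the intervals. Your phrase about ``the analogous estimates for intersections of slopes lying in different intervals to move between pieces without loss'' is exactly the step that needs to be constructed, and it is not.

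The paper's resolution has two ingredients, neither of which appears in your proposal, and neither of which is contained in \autoref{lem:count n} (which is only the counting half). First, for each height $k\le h$ it singles out the \emph{extremal} branches containing a height-$k$ horoball, with indices $j_k^\pm$ and extremal numerators $\ell_k,r_k$, and computes explicitly $I_{kk'}=kk'\,|j_{k'}^+-j_k^-|+k'\ell_k-kr_{k'}$, so that $\frac{1}{kk'}(I_{kk'}+I_{k'k})\ge x_k+x_{k'}-2+(\frac{\ell_k-r_k}{k})+(\frac{\ell_{k'}-r_{k'}}{k'})$. Second, it builds the weight system: the graph $\Gamma_h$ of \autoref{prop:Gammas} on $\{1,\dots,h\}$ (edges: coprime pairs with $k+k'>h$), in which vertex $k$ has degree exactly $\phi(k)$ while the edge weights $\frac{2}{kk'}$ sum to $1$. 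This is what makes the weighted sum of the displayed inequalities reproduce $\sum_k\phi(k)x_k$ term-for-term, so that \autoref{lem:count n} converts it into $n-O(h\log h)$ while the total weight remains $1$. In short, you have correctly identified where the difficulty lies (the bookkeeping that prevents a factor of $w$ in the error), but you assert its resolution rather than provide it; the explicit cross-branch intersection formula and the coprime-pair weighting $\Gamma_h$ are the missing ideas, and without them the proposed induction does not yield the bound.
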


As in \S\ref{sec:width/height}, let $H_1$ and $H_2$ be the two horoballs that are incident to $H$ along its two extreme edges.
By construction, the non-backtracking path $\sigma$ from $H_1$ to $H_2$ is contained in $H$; we indicate the vertices $\sigma$ passes through in order by $p_1,\ldots,p_w$.
See \autoref{fig:branches}.

\begin{figure}[h]
\centering
\includegraphics[width=10.5cm]{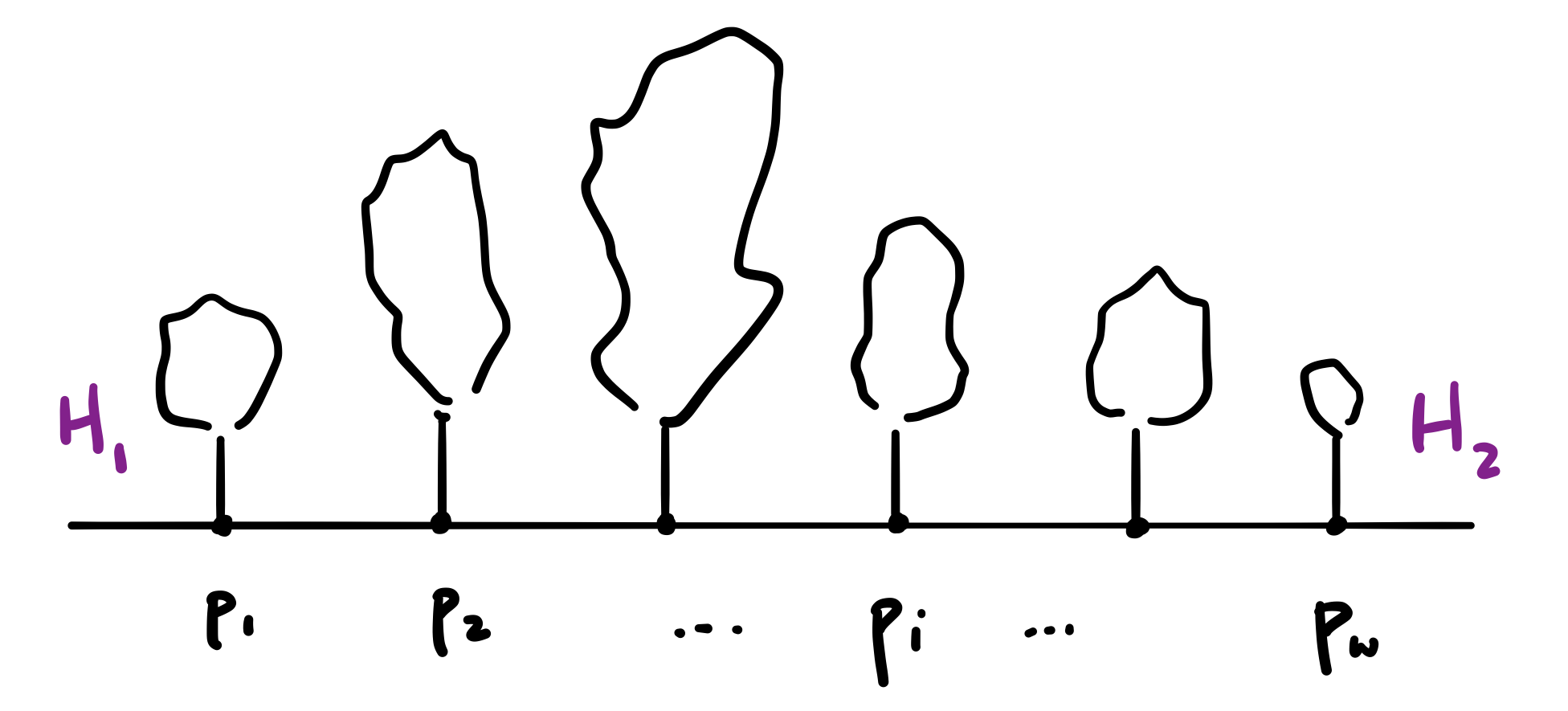}
\caption{The horoball $H$ determines branches for $\tau$, and extreme horoballs $H_1$ and $H_2$.}
\label{fig:branches}
\end{figure}

For each $j$, the complement $\tau \setminus p_j$ consists of three components, and we indicate (the closure of) the unique such component that doesn't intersect $H$ as the $j$th \emph{branch} $B(j)$.

\begin{figure}
\centering
\includegraphics[width=10cm]{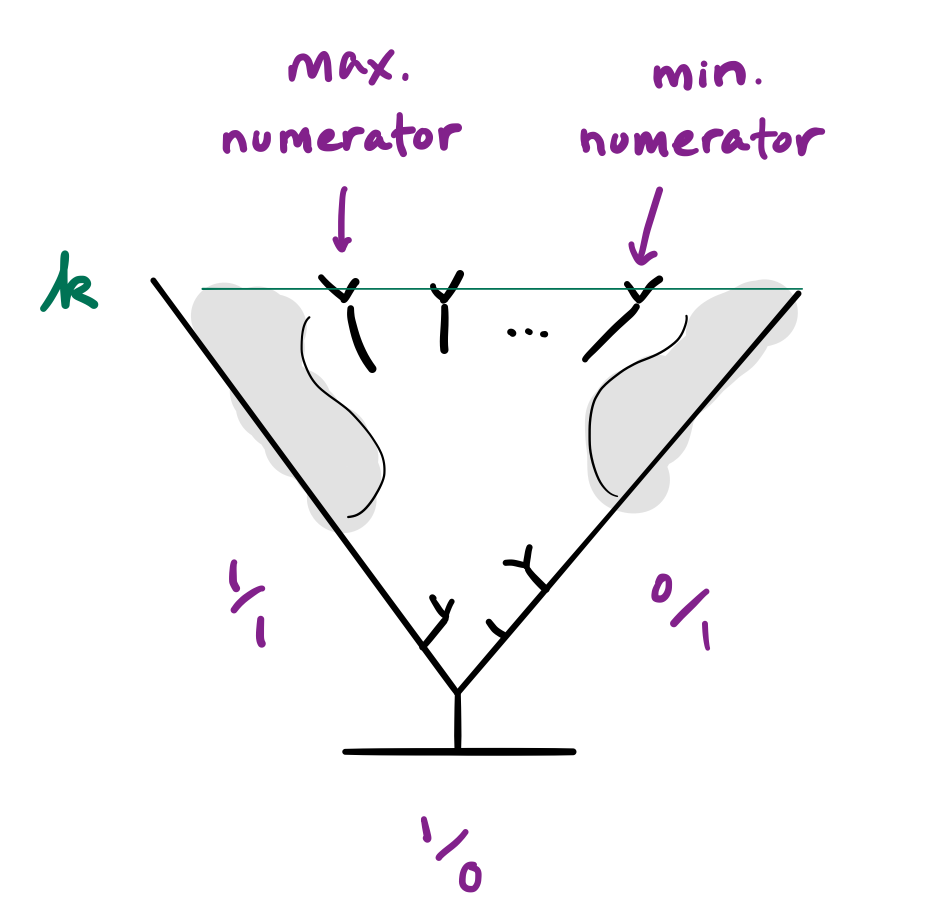}
\caption{Each branch has minimal and maximal numerators at height $k$.}
\label{pic:numerators}
\end{figure}

Label $H$ with $1/0$, label the horoball neighboring $H$ along the edge $\overline{p_j p_{j+1}}$ with $0/1$, and label the horoball neighbor of $H$ along $\overline{p_{j-1}p_j}$ with $1/1$. Now Farey addition determines how to fill in labels for the remaining horoballs, and let $\mathrm{ht}_H \, B(j)$ indicate the maximum denominator among Farey labels for horoballs intersecting $B(j)$. 
The vertices of $B(j)$ at height $k$ are given by $\frac {a_1}k, \frac{a_2}k,\ldots,\frac{a_{n_k}}k$. The \emph{minimum (resp.~max) numerator at height $k$} of $B(j)$, relative to $H$, is $\min a_i$ (resp.~$\max a_i$). See \autoref{pic:numerators}.

\begin{figure}
\centering
\includegraphics[width=10cm]{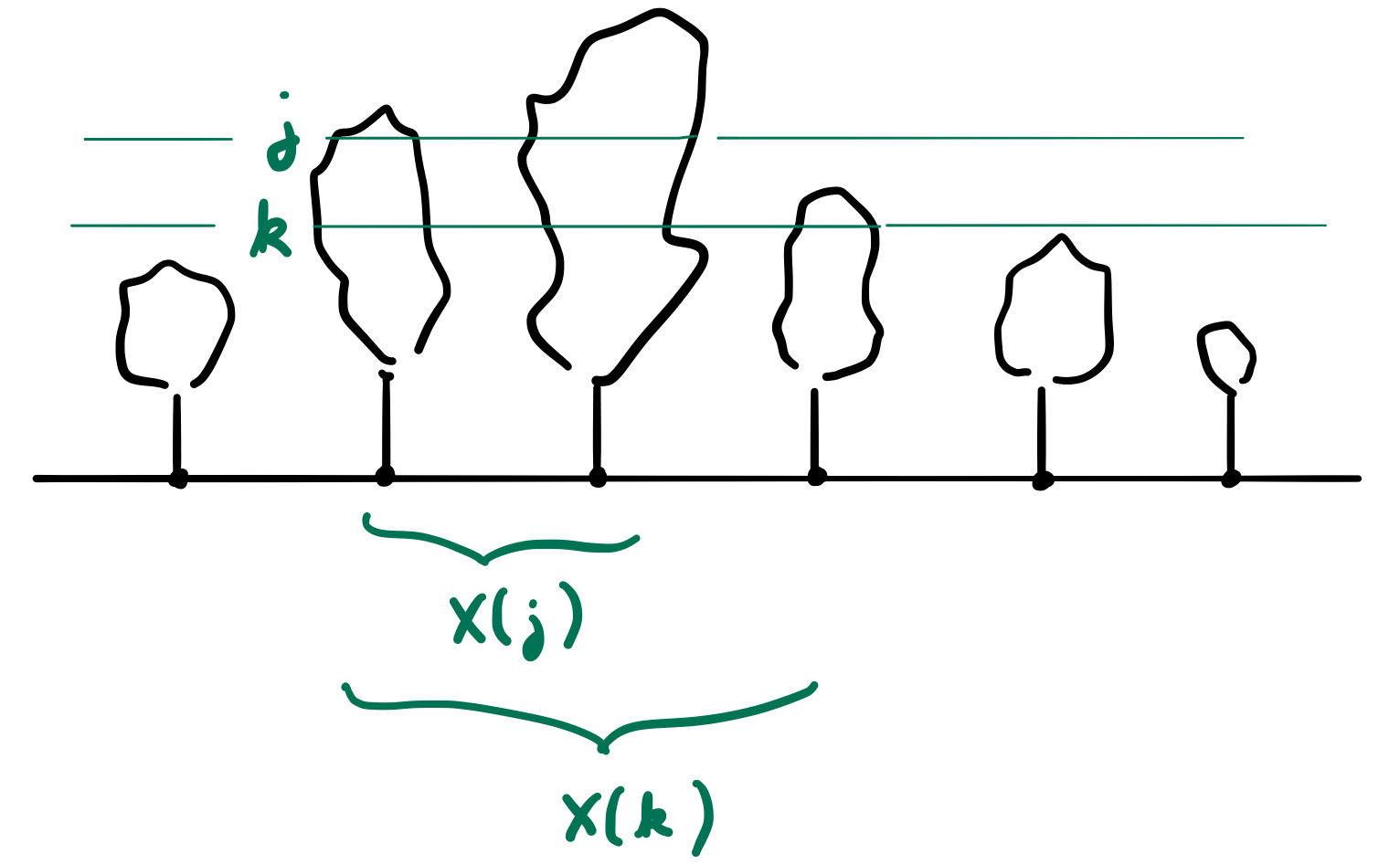}
\caption{The horoballs of $\tau$ may be filtered according to heights from $H$.}
\label{fig:heights}
\end{figure}

Observe that we may count the $n$ horoball regions of $\tau$ by filtering them according to their heights on the branches.
That is, for each $i\in \bN$, let $X(k) = \{ j : \mathrm{ht}_H \, B(j) \ge k\}$ (that is, the set of indices $j$ where the $j$th branch has height at least $k$), and let $x_k = \#X(k)$.
See Figure~\ref{fig:heights}. 

\begin{figure}
\centering
\includegraphics[width=10cm]{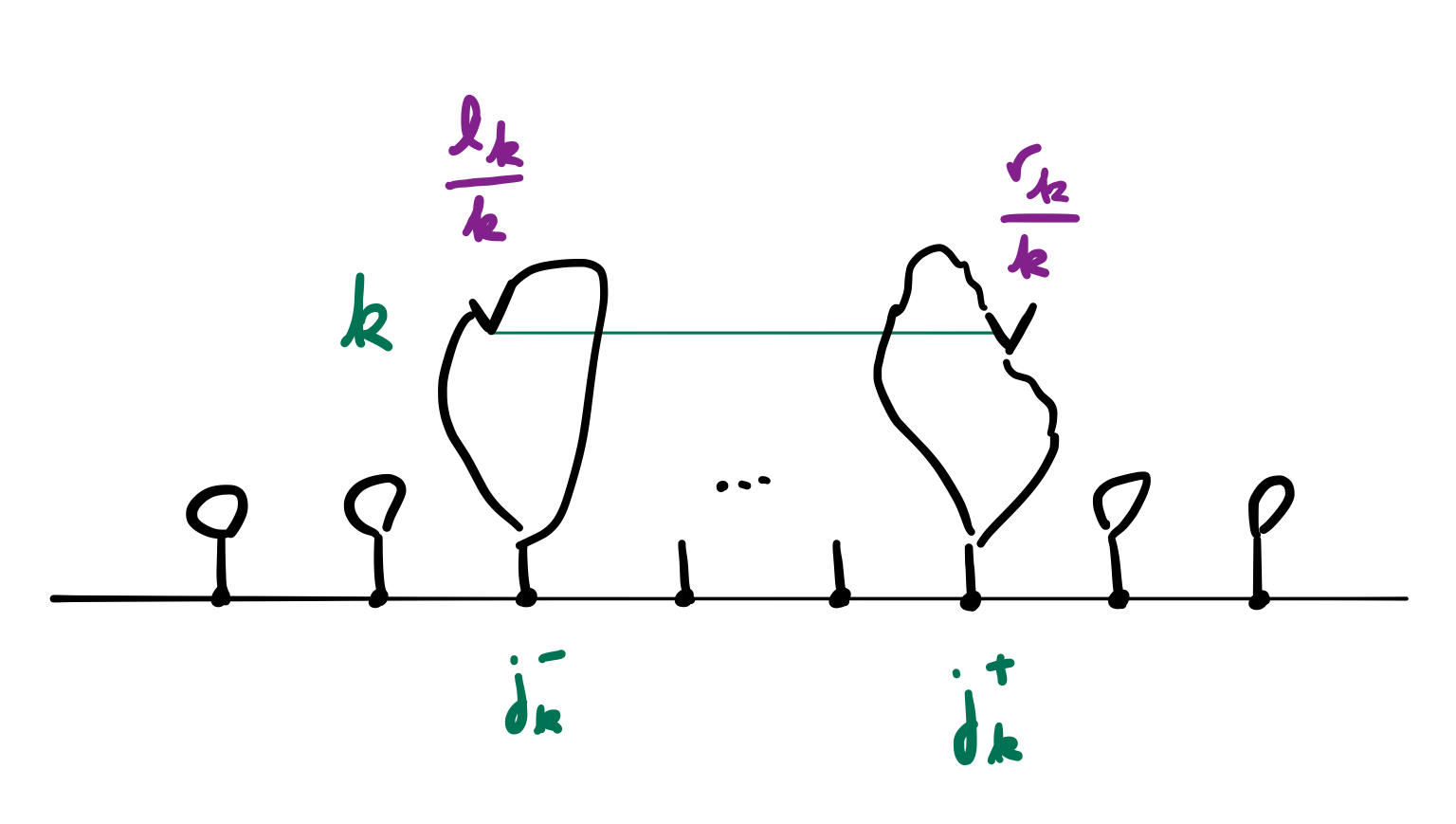}
\caption{Extremal horoballs of $\tau$ at height $k$ from $H$.}
\label{fig:function}
\end{figure}

Consider $j_k^+$ (resp.~$j_k^-$), the maximal (resp.~minimal) index of $X(k)$. Let $\ell_k$ be the maximal numerator of $B(j_k^-)$ and let $r_k$ be the minimal numerator of $B(j_k^+)$.
See \autoref{fig:function}.

\begin{lemma}
\label{lem:count n}
We have 
\[
\sum_{k=1}^h \phi(k)x_k \ge n + \sum_{k=1}^h \phi(k) 
+ \sum_{k=1}^h \frac{\phi(k)}k (r_k - \ell_k)
- O\left( h \log h\right)~.
\]
\end{lemma}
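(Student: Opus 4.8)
The plan is to count the $n$ horoball regions of $\tau$ by organizing them according to the branch structure determined by $H$. Recall that the path $\sigma$ inside $H$ runs through vertices $p_1,\dots,p_w$, and each $p_j$ spawns a branch $B(j)$. Every horoball of $\tau$ lies in exactly one of the closed branches $B(1),\dots,B(w)$ (with the horoball $H$ itself, and the two extreme horoballs $H_1,H_2$, accounted for at the ends). So I would write $n$, up to a bounded number of boundary terms, as $\sum_{j=1}^w \#\{\text{horoballs of } \tau \text{ in } B(j)\}$, and then re-sum by height: a horoball at height $k$ on branch $B(j)$ is counted, and the branches contributing at height $k$ are exactly those indexed by $X(k)$. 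Thus $n = \sum_{k\ge 1} (\text{number of horoballs at height exactly } k, \text{ summed over all branches}) + O(1)$.

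The key point is then to estimate, for a fixed branch $B(j)$ at height $k$, how many horoballs sit at height exactly $k$. With the Farey labelling normalized so that $H$ gets $1/0$, the horoballs at height $k$ in $B(j)$ have labels $\tfrac{a_1}{k},\dots,\tfrac{a_{n_k}}{k}$ with the numerators $a_i$ forming a set of integers that, crucially, must be relatively prime to $k$ and must lie in an interval. In the interior branches the relevant numerators run over all of $[m]_k$ for the appropriate $m$ (this is precisely the combinatorics of the regular trivalent tree / Farey tessellation restricted to a branch), so by \eqref{eq:euler count} the count at height $k$ on an interior branch is $\tfrac{m}{k}\phi(k) + O(d(k))$ for a suitable $m$, and summing the $O(d(k))$ error over $j\in X(k)$ and then over $k\le h$ costs only $O\!\left(\sum_{k\le h} x_k d(k)/\ldots\right)$ — I will want to bound this crudely by $O(h\log h)$ using that $x_k\le$ (something), together with \eqref{eq:dirichlet}. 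The two \emph{extreme} branches at each height, namely $B(j_k^-)$ and $B(j_k^+)$, are truncated: their numerators do not fill a full fundamental interval but are cut off at the maximal numerator $\ell_k$ of $B(j_k^-)$ and the minimal numerator $r_k$ of $B(j_k^+)$ respectively. This is exactly where the correction term $\sum_k \tfrac{\phi(k)}{k}(r_k-\ell_k)$ enters: the deficit (or surplus) from these two truncated branches, relative to what a full branch would contribute, is $\tfrac{\phi(k)}{k}(r_k-\ell_k) + O(d(k))$ by another application of \eqref{eq:euler count}.

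Assembling these pieces, I get
\[
n = \sum_{k=1}^h \phi(k)\,x_k \;-\; \sum_{k=1}^h \phi(k) \;-\; \sum_{k=1}^h \frac{\phi(k)}{k}(r_k-\ell_k) \;+\; (\text{error}),
\]
where the $-\sum \phi(k)$ term records that each branch present at height $k$ contributes roughly $\tfrac{\phi(k)}{k}\cdot(\text{width at that height})$ horoballs and the width-counting telescopes so that the "$+1$ per level" overcounts get collected into $\sum_k\phi(k)x_k$ while one copy of $\sum_k \phi(k)$ must be subtracted back; rearranging gives the claimed inequality (an inequality rather than an equality because I only need a lower bound on $\sum\phi(k)x_k$, and I can afford to discard favorable contributions). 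The error term is a sum of $O(d(k))$ contributions, one for each branch at each height, i.e.\ $O\!\left(\sum_{k\le h} x_k d(k)\right)$; since the total number of branch-levels is $n$ and more carefully $x_k$ is controlled, the dominant estimate comes from $\sum_{k\le h} d(k) = h\log h + O(h)$ after absorbing the multiplicities, yielding $O(h\log h)$.

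The main obstacle I anticipate is the bookkeeping in the middle step: correctly identifying which set of numerators appears at height $k$ on each branch (full residue interval $[m]_k$ for interior branches versus the truncated ranges governed by $\ell_k$ and $r_k$ for the two extreme branches), and verifying that the per-level overcount/undercount from the "$+1$" in continued-fraction/Farey counting telescopes cleanly into the terms $\sum_k\phi(k)x_k$ and $-\sum_k\phi(k)$ as written, rather than leaving a residual term of the wrong order. Once the combinatorial identity is pinned down exactly, the analytic input is just the three estimates of \autoref{lem:number theory} applied termwise, with \eqref{eq:euler count} converting "count of numerators prime to $k$ in an interval of length $L$" into "$\tfrac{L}{k}\phi(k) + O(d(k))$", and \eqref{eq:dirichlet} controlling the accumulated $O(d(k))$ errors.
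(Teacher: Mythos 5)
Your overall skeleton matches the paper's: count the horoballs of $\tau$ by height relative to $H$ (the total over heights $1,\dots,h$ is exactly $n-2$), bound the count at height $k$ above by $\phi(k)x_k$ minus corrections coming from the two extreme branches, and rearrange. The genuine gap is in your error accounting. You apply \eqref{eq:euler count}, with its $O(d(k))$ error, to \emph{every} branch in $X(k)$, so your accumulated error is $O\bigl(\sum_{k\le h} x_k\, d(k)\bigr)$, and you say you hope to bound this by $O(h\log h)$ ``using that $x_k\le$ (something).'' No such bound exists: $x_k$ is only bounded by the width of $H$, which is not controlled by $h$ and can be comparable to $n$ (already for $\ch(n)$ one has $x_1\approx n$ while $h=1$). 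An error of size $n\log h$ would swamp the statement, whose entire point is that the error term depends on $h$ alone, not on $n$. The paper's proof avoids this: for interior branches it uses the exact combinatorial bound that any branch has at most $\phi(k)$ horoballs at height $k$ --- no error at all --- and invokes \eqref{eq:euler count} only for the two extreme branches $B(j_k^-)$ and $B(j_k^+)$ at each height, so the total number-theoretic error is $\sum_{k\le h} O(d(k)) = O(h\log h)$ by \eqref{eq:dirichlet}, plus an $O(h)$ contribution $\sum_{k\le h}\phi(k)/k$ handled by \eqref{eq:walfisz} (which your sketch never needs to invoke --- a symptom that the bookkeeping has drifted from what actually closes the argument).

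A secondary problem: your claim that an interior branch is ``full'' at height $k$, with numerators running over all of $[m]_k$ so that its count is $\tfrac mk\phi(k)+O(d(k))$, is not justified and is false in general; a branch of height at least $k$ need not contain all $\phi(k)$ fractions of denominator $k$ in its interval, so no per-branch asymptotic equality is available, only the upper bound $\phi(k)$. Fortunately only an upper bound is needed, since the argument upper-bounds the number of horoballs at each height in order to lower-bound $\sum_k\phi(k)x_k$ (a point you partially acknowledge when you note the statement is an inequality). Once you commit to one-sided estimates and confine \eqref{eq:euler count} to the two truncated extreme branches --- whose cutoffs at $\ell_k$ and $r_k$ are exactly what produces the term $\sum_k\tfrac{\phi(k)}{k}(r_k-\ell_k)$ --- the rest of your outline goes through as in the paper.
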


\begin{proof}
Observe that the sum of the number of horoballs of $\tau$ at height $k$ relative to $H$, as $k$ goes from $1$ to $h$, is exactly $n-2$.

The horoballs of $B(j)$ at height $k$ relative to $H$ have size at most $\phi(k)$, so the total number of horoballs of $\tau$ at height $k$ from $H$ is at most $\phi(k) x_k$. Observe that we may count the vertices of $B(j_k^-)$ and $B(j_k^+)$ with slightly more care: When the maximal and minimal numerators of $B(j)$ at height $k$ are $\ell$ and $r$, the horoballs of $B(j)$ at height $k$ relative to $H$ are at most $\phi(k)-\#[r-1]_k$, and at most $\#[\ell]_k$.
Therefore the total number of horoballs of $\tau$ at height $k$ relative to $H$ are at most 
\[
\phi(k)x_k - (\underbrace{\phi(k)-\#[\ell_k]_k}_{\text{overcount in }B(j_k^-)} ) - \underbrace{\#[r_k-1]_k}_{\text{overcount in }B(j_k^+)} ~.
\]
By \eqref{eq:euler count}, the latter is at most 
\[
\phi(k)x_k   - \phi(k) + \frac{\ell_k}k \phi(k) - \frac{r_k-1}k \phi(k) + O(d(k))~.
\]
The sum of this expression as $k$ goes from $1$ to $h$ is at least $n-2$, so rearranging we find that
\[
\sum_{k=1}^h \phi(k)x_k \ge n +\sum_{k=1}^h \phi(k) + \sum_{k=1}^h \frac{\phi(k)}k (r_k-\ell_k) - 2- \sum_{k=1}^h \frac{\phi(k)}k - C\sum_{k=1}^h d(k)~.
\]
By \eqref{eq:dirichlet} and \eqref{eq:walfisz}, the last three terms can be replaced by $O(h\log h)$.
\end{proof}

The reader may observe how \autoref{lem:count n} is somewhat suggestive of \eqref{eq:goal}.

Given heights $k$ and $k'$, consider the intersection $I_{kk'}$ between the horoball of $B(j_k^-)$ with maximal numerator $\ell_k$ and the horoball of $B(j_{k'}^+)$ of minimal numerator $r_{k'}$.
We may compute:
\begin{equation*}
I_{kk'} = kk' 
|j_{k'}^+ - j_k^-|
+ k'\ell_k - kr_{k'}~
\end{equation*}
Therefore, we have
\begin{equation*}
I_{kk'}+I_{k'k} = kk'\left( \big| j_{k'}^+-j_k^-\big| + \big| j_{k}^+-j_{k'}^-\big| \right) + (k'\ell_k - kr_{k'} + k\ell_{k'}-k'r_k)~.
\end{equation*}
Notice that $\big| j_{k'}^+-j_k^-\big| + \big| j_{k}^+-j_{k'}^-\big| \ge x_k+x_{k'}-2$, so dividing by $kk'$ we find
\begin{equation}
\label{eq:sum of Is}
\frac1{kk'} \left(I_{kk'} + I_{k'k}\right) \ge x_k+x_{k'} - 2 + \left(\frac{\ell_k}k- \frac{r_k}k\right) + \left(\frac{\ell_{k'}}{k'}  - \frac{r_{k'}}{k'}\right)~.
\end{equation}

With \autoref{lem:count n} in mind, we would like to choose pairs $\{k,k'\}\subset [h]$ so that the sum over the choices made of the terms `$x_k+x_{k'}$' on the righthand side of \eqref{eq:sum of Is} is equal to $\sum \phi(k)x_k$.
The following proposition makes this idea feasible.

\begin{proposition}
\label{prop:Gammas}
For each $h\in \bN$, there is a graph $\Gamma_h$ satisfying:
\begin{enumerate}
\item 
The vertex set of $\Gamma_h$ is given by $\{1,\ldots, h\}$.
\item \label{item:valence}
The valence of vertex $k$ is $\phi(k)$.
\item \label{item:weights}
The sum $\displaystyle \sum_{k\sim k'} \frac2{kk'}$ over the edges of $\Gamma_h$ is equal to $1$.
\end{enumerate}
\end{proposition}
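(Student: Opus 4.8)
The plan is to build $\Gamma_h$ from the Farey dissection of the interval $[0,\tfrac12]$ at level $h$. List the fractions in $\bQ\cap[0,\tfrac12]$ expressible with denominator $\le h$, in increasing order and in lowest terms, as
\[
\tfrac01=\tfrac{a_0}{b_0}<\tfrac{a_1}{b_1}<\cdots<\tfrac{a_m}{b_m}=\tfrac12
\]
(this list is a prefix of the Farey sequence of order $h$, and contains at least its two displayed entries once $h\ge2$, which we assume; for $h=1$ the asserted graph cannot exist, but that case is handled separately when the proposition is applied). Let $\Gamma_h$ be the graph on $\{1,\ldots,h\}$ with one edge joining $b_i$ to $b_{i+1}$ for each $i=0,\ldots,m-1$. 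Every $k\in\{1,\ldots,h\}$ occurs as some $b_i$ (for $k\ge3$ because $\phi(k)\ge2$), so this is a graph on the asserted vertex set; it should be read as a multigraph, which causes no difficulty in what follows. I would then check properties (1)--(3) directly; (1) is the remark just made.

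Property (3) is immediate and is the reason for working over $[0,\tfrac12]$ rather than $[0,1]$. Since consecutive entries of the Farey sequence satisfy $a_{i+1}b_i-a_ib_{i+1}=1$, the gaps telescope:
\[
\sum_{i=0}^{m-1}\frac{1}{b_ib_{i+1}}=\sum_{i=0}^{m-1}\left(\frac{a_{i+1}}{b_{i+1}}-\frac{a_i}{b_i}\right)=\frac12-0=\frac12,
\]
which is precisely $\sum_{k\sim k'}\tfrac{2}{kk'}=1$. (Over $[0,1]$ the same computation would give $1$, hence $\sum\tfrac{2}{kk'}=2$; halving the interval supplies the needed factor of $\tfrac12$, and, as we see next, simultaneously halves every valence.)

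For property (2), observe that a fraction $\tfrac{a_i}{b_i}$ contributes two edge-endpoints at the vertex $b_i$, except the two extreme fractions $\tfrac01$ and $\tfrac12$, which contribute one each. For $k\ge3$ no extreme fraction has denominator $k$, so the valence of $k$ is twice the number of fractions in lowest terms with denominator exactly $k$ lying in $[0,\tfrac12]$; the map $a\mapsto k-a$ is a fixed-point-free involution on the residues coprime to $k$ (using $k\ge3$), pairing each such fraction in $(0,\tfrac12)$ with one in $(\tfrac12,1)$, so there are $\phi(k)/2$ of them and the valence of $k$ is $\phi(k)$. For $k=1$ the only relevant fraction is $\tfrac01$ and for $k=2$ it is $\tfrac12$, each extreme, so the valence is $1=\phi(1)=\phi(2)$. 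I should also record that $\Gamma_h$ is loopless: $b_i=b_{i+1}$ would force $b_i(a_{i+1}-a_i)=1$, hence $b_i=1$, but $\tfrac01$ is the unique fraction of denominator $1$ in the list and it has no left neighbor.

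I do not anticipate any genuine obstacle here. The one point requiring care is the endpoint bookkeeping in the valence count, together with the observation that the denominators $1$ and $2$ occur only at the endpoints $\tfrac01$ and $\tfrac12$; everything else follows from the two standard facts about the Farey sequence (the determinant relation $a_{i+1}b_i-a_ib_{i+1}=1$ and coprimality of neighbours) once one notices that passing to $[0,\tfrac12]$ halves both the valences and the total edge weight relative to the full Farey picture on $[0,1]$.
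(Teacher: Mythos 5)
Your proof is correct, but it reaches the graph by a genuinely different route than the paper does. The paper defines $\Gamma_h$ arithmetically, declaring $k\sim k'$ exactly when $\gcd(k,k')=1$ and $k+k'>h$; it then gets the valence property from the bijection $[k]_k \to \{k' : \gcd(k,k')=1,\ k'\le h,\ k+k'>h\}$ given by shifting a totative by a suitable multiple of $k$, and gets the edge-weight identity by induction on $h$, observing that passing from $\Gamma_h$ to $\Gamma_{h+1}$ replaces each edge $k\sim k'$ with $k+k'=h+1$ by the two edges $k\sim(h+1)$, $k'\sim(h+1)$ of equal total weight, with base case $\tfrac{2}{1\cdot 2}=1$. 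You instead realize the graph as the consecutive-denominator graph of the Farey sequence of order $h$ restricted to $[0,\tfrac12]$, prove the weight identity by telescoping via the unimodularity relation $a_{i+1}b_i-a_ib_{i+1}=1$, and prove the valence property by the totative involution $a\mapsto k-a$ together with endpoint bookkeeping at $0/1$ and $1/2$; all of these steps are sound. What your construction buys is a transparent, non-inductive reason for the normalization $\sum 2/(kk')=1$ (it is literally the length of $[0,\tfrac12]$) and a pleasant link to the example $\Far(h)$ already appearing in the paper; what the paper's description buys is an explicit arithmetic criterion for adjacency. In fact the two graphs coincide for $h\ge 2$: by the classical characterization, each ordered coprime pair $(b,d)$ with $b,d\le h$ and $b+d>h$ occurs exactly once as consecutive denominators in the full Farey sequence on $[0,1]$, and the symmetry $x\mapsto 1-x$ places exactly one of the two ordered occurrences of each unordered pair inside $[0,\tfrac12]$. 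This also shows your caution about multi-edges is unnecessary --- your graph is simple --- though, as you say, a multigraph with the stated degrees and weight sum would serve equally well in the application in \autoref{prop: control with height}. Your remark about $h=1$ is fair: the statement is vacuously problematic there (a single vertex cannot have valence $\phi(1)=1$), and the paper's own induction likewise starts at $h=2$; this case is immaterial for the way the proposition is used.
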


See \autoref{pic:graph} for a picture of $\Gamma_6$.

\begin{proof}
Declare $k\sim k'$ when $\gcd(k,k')=1$ and $k+k'>h$.

For property~\eqref{item:valence}, choose a vertex $k$. 
Each integer $i$ relatively prime to $k$ can be shifted by $k$ to $i+k$, another integer relatively prime to $k$. 
For $1\le i\le k$, we may choose the maximum $n$ so that $i+nk\le h$. 
The result is a bijection of $[k]_k$, the integers in $[k]$ relatively prime to $k$, with the set of integers $k'$, relatively prime to $k$, less than $h$, and so that $k+k'>h$. Therefore the valence of $k$ is $\#[k]_k=\phi(k)$.

For property~\eqref{item:weights}, observe that, to transform $\Gamma_h$ into $\Gamma_{h+1}$, the edges $k\sim k'$ with $k+k'=h+1$ are deleted and replaced by edges $k\sim(h+1)$ and $k'\sim(h+1)$. Because $k+k'=h+1$, the edge weight $\frac2{kk'}$ in $\Gamma_h$ is equal to the sum of edge weights $\frac2{k(h+1)}+\frac2{k'(h+1)}$ in $\Gamma_{h+1}$, so the sum $\sum_{k\sim k'}\frac2{kk'}$ is independent of $h$.  
For the base case $h=2$, observe that $\frac2{1\cdot2}=1$.
\end{proof}

\begin{figure}
\centering
\includegraphics[height=7cm]{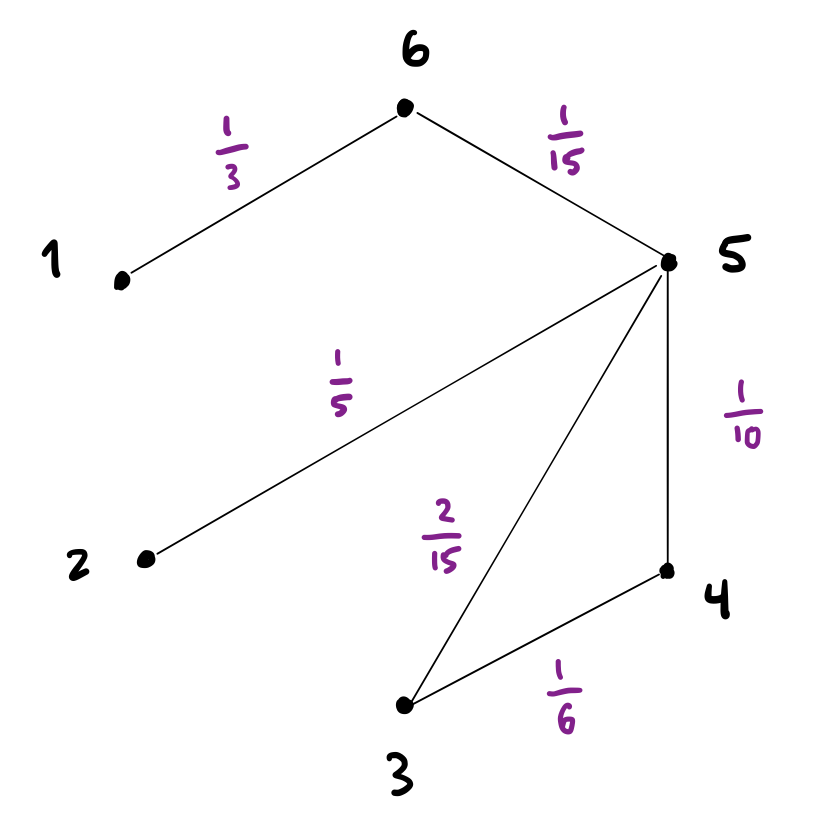}
\caption{The graph $\Gamma_6$ as described in \autoref{prop:Gammas}.
The sum of the edge weights is $1$.}
\label{pic:graph}
\end{figure}

Let $E$ indicate the set of edges of $\Gamma_h$. Now \autoref{prop:Gammas} and \eqref{eq:sum of Is} imply:

\begin{align*}
\sum_E \frac1{kk'} \left(I_{kk'} + I_{k'k}\right)
&\ge \ 
\sum_E  \ \left( x_k+x_{k'} - 2 + \left(\frac{\ell_k}k- \frac{r_k}k\right) + \left(\frac{\ell_{k'}}{k'}  - \frac{r_{k'}}{k'}\right) \right)  \\
&= \sum_k \phi(k) x_k \ -\  \sum_k \phi(k) \ + \ \sum_k \frac{\phi(k)}k \left( \ell_k  - r_k \right) 
\end{align*}
Applying \autoref{lem:count n}, we find that
\begin{equation}
\label{eq:convex estimate}
\sum_E \frac1{kk'} \left(I_{kk'} + I_{k'k}\right) \ge n - O\left( h \log h\right)~.
\end{equation}
Because $\sum_E \frac2{kk'} =1$ by \autoref{prop:Gammas}, inequality \eqref{eq:convex estimate} proves \autoref{prop: control with height}.

\section{Finding a horoball of controlled relative height}
\label{sec:controlling height}

For many $\tau\in\cA_n$, there exist $H$ so that the $\mathrm{ht}(\tau,H)$ is $O(1)$, so \autoref{prop: control with height} demonstrates that $\kappa(\tau)=n-O(1)$.
However, such a horoball need not exist, e.g.~every horoball of $\ach(n)$ has height at least $\approx (\frac{1+\sqrt 5}2)^{n/2}$.
Nonetheless, $\kappa(\ach(n))$ is quite large (on the order $(\frac{1+\sqrt 5}2)^n$), so one might hope that it is always possible to find horoballs of small height relative to $\kappa(\tau)$. 
We show:

\begin{proposition} 
\label{prop: exists horoball}
There exists a constant $C>0$ so that, for any $\tau\in \cA_n$, there exists a horoball $H$ of $\tau$ so that the height of $\tau$ relative to $H$ is controlled as $\mathrm{ht}(\tau,H) \le C \sqrt{\kappa(\tau)}$.
\end{proposition}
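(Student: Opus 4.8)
The plan is to use the hyperbolic geometry of the Ford circle packing to locate a horoball $H$ whose relative height is small compared to $\kappa(\tau)$. The key translation is \autoref{lem:i vs d}: the intersection number $\iota(H,H')$ is (up to a factor of $2$ in the exponent) the exponential of the hyperbolic distance between the corresponding Ford circles. Hence asking for a horoball $H$ with $\mathrm{ht}(\tau,H)\le C\sqrt{\kappa(\tau)}$ is, after taking $\log$, asking for $H$ whose Ford circle is within hyperbolic distance $\log\kappa(\tau) + O(1)$ of every other Ford circle of $\tau$; meanwhile $\kappa(\tau)$ itself corresponds to the \emph{diameter} $D$ of the finite set of Ford circles $\{H_\alpha\}$ of $\tau$, where $D = 2\log\kappa(\tau) = \max_{\alpha,\beta} d_{\bH^2}(H_\alpha,H_\beta)$. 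So the statement to prove becomes: in the finite metric space of Ford circles of $\tau$ (sitting inside $\bH^2$), there is a point whose distance to every other point is at most $\tfrac12 D + O(1)$ — i.e., the set has a ``center'' realized (up to bounded error) by one of its own points.

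First I would pass from the discrete set to its convex hull $K\subset \bH^2$, the union of Farey triangles spanning $\Gamma$ guaranteed by \autoref{lem: convexity}; this is a compact convex region. A compact convex set in $\bH^2$ has a well-defined circumcenter $c$ and circumradius $\rho$, and by a Jung-type inequality in the hyperbolic plane one has $\rho \le \tfrac12 D + O(1)$ — actually in $\bH^2$ the bound is cleaner: any set of diameter $D$ is contained in a ball of radius roughly $D/2$ (the constant-curvature Jung constant behaves well, and we only need it up to an additive $O(1)$ anyway). Every vertex of $K$, in particular every Ford-circle center, lies within $\rho$ of $c$. Next, by \autoref{lem:distance from horoball} every point of $\bH^2$ — in particular $c$ — lies within $\log\frac{2}{\sqrt3}$ of \emph{some} horoball $H$ of the full Ford packing $\cH$. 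The point is to arrange that this $H$ is one of the horoballs \emph{of $\tau$}; this requires a small argument, e.g.\ choosing $c$ inside $K$ (the circumcenter of a convex set lies in the set) and noting that the Ford circle nearest to a point of $K$ can be taken among those incident to the Farey triangle containing that point, all of whose vertices lie in $\Gamma$ by \autoref{lem: convexity}. Then for this $H$ and any horoball $H'$ of $\tau$,
\[
d_{\bH^2}(H,H') \le d_{\bH^2}(H,c) + d_{\bH^2}(c,H') \le \log\tfrac{2}{\sqrt3} + \rho \le \tfrac12 D + O(1) = \log\kappa(\tau) + O(1),
\]
and exponentiating via \autoref{lem:i vs d} gives $\mathrm{ht}(\tau,H) = \max_{H'}\iota(H,H') \le C\sqrt{\kappa(\tau)}$, as desired.

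The main obstacle I anticipate is the bookkeeping in the step that forces the near-center horoball $H$ to be a horoball of $\tau$ rather than some extraneous Ford circle of $\cH$ disjoint from $\Gamma$: one must genuinely use convexity (\autoref{lem: convexity}) to confine the relevant Ford circles to those spanning triangles of $\tau$, and check that ``height'' as defined combinatorially in \autoref{def:horoball height width} (max over horoballs \emph{of $\tau$}) matches the hyperbolic-geometric max being estimated. A secondary technical point is pinning down the hyperbolic Jung-type inequality with the right (additive-error-only) constant; this should follow from an elementary comparison argument in $\bH^2$, or one can sidestep it by covering $K$ with a ball whose radius is controlled directly by the two Ford circles realizing the diameter. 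I would expect the excerpt's \autoref{rem: Agol proof} to indicate that Agol has an alternative route to this proposition, which suggests the geometric argument above is not the only option but is the most self-contained.
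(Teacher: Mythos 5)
Your overall skeleton is the same as the paper's: translate intersection numbers into hyperbolic distances via \autoref{lem:i vs d}, produce a point that is within $\log\kappa(\tau)+O(1)$ of every horoball of $\tau$, snap that point to a nearby Ford circle using \autoref{lem:distance from horoball}, check that this Ford circle is a horoball of $\tau$ by convexity, and exponentiate. In fact the paper produces the coarse center exactly by the route you call a ``sidestep'': it takes the two horoballs $K_1,K_2$ realizing $\kappa(\tau)$, lets $x$ be the midpoint of the geodesic segment between them (of length $2\log\kappa(\tau)$), and uses $\delta$-thinness of triangles in $\bH^2$ to get $d_{\bH^2}(K,x)\le \log\kappa(\tau)+2\delta$ for every horoball $K$ of $\tau$; and \autoref{rem: Agol proof} records Agol's alternative via Helly's theorem, as you anticipated.

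The step you would need to repair is the circumcenter/Jung packaging as literally stated. The convex hull of the slopes of $\tau$ in $\bH^2$ is an ideal polygon: it is not compact, its circumradius is infinite, and its vertices --- the centers of the Ford circles --- lie on $\partial_\infty\bH^2$, so the assertion that ``every Ford-circle center lies within $\rho$ of $c$'' cannot be taken at face value, since no point of $\bH^2$ is within finite distance of an ideal point. What you actually need is a Chebyshev-type center for the finite family of horoballs: a point $c$ with $d_{\bH^2}(c,H_\alpha)\le \tfrac12 D+O(1)$ for every horoball $H_\alpha$ of $\tau$, where $D=\max_{\alpha,\beta} d_{\bH^2}(H_\alpha,H_\beta)=2\log\kappa(\tau)$. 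Jung-type inequalities in $\bH^2$ concern bounded point sets, while the union of the horoballs has infinite diameter, so such a statement does not apply off the shelf; the clean way to prove the horoball version with only additive error is precisely the diametral-pair midpoint plus thin-triangles computation above (or Helly's theorem, per Agol). Once that is supplied, your concluding chain of inequalities, and the observation that the Ford circle nearest the center is centered at a vertex of the Farey triangle containing it and hence belongs to $\tau$, match the paper; note that for this last point one does not need the full strength of \autoref{lem: convexity} --- for arbitrary $\tau\in\cA_n$ it suffices that the Farey realization of $\tau$ is a convex ideal polygon containing the geodesic between $K_1$ and $K_2$.
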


\begin{remark}
\label{rem: sharpness heights}
The reader can observe that the conclusion above fits the data in \autoref{table}. 
It is tempting to hope for an improvement of \autoref{prop: exists horoball} along the following lines: 
as $\kappa(\tau)$ gets closer to $\min \kappa$ (e.g.~if $\kappa(\tau)\le n$), one should be able to find horoballs of $\tau$ with relative heights $\ll \sqrt n$.

On the other hand, the row containing $\tau=\Far(h)$, with $n\approx \frac3{\pi^2}h^2$, makes this hope seem quite remote. 
Indeed, $\kappa(\Far(h))$ is greater than $n$ only by the innocuous looking linear factor $\frac {\pi^2}3 \approx 3.3$, and yet every horoball has relative height $\ge h \approx \sqrt n$.
\end{remark}

\begin{proof}
Let $K_1,K_2$ be horoballs of $\tau$ so that $\iota(K_1,K_2)=\kappa(\tau)$, and
let $r=\log \kappa(\tau)$.
By \autoref{lem:i vs d}, we have $d_{\bH^2}(K_1,K_2) = 2r$.

Let $x\in \bH^2$ be the midpoint of the geodesic from $K_1$ to $K_2$.
By \autoref{lem:distance from horoball} there is some Farey horoball $H$ so that $d_{\bH^2}(H,x)\le \log \frac2{\sqrt3}$. 
The Farey horoball $H$ is incident to the geodesic between $K_1$ and $K_2$, so by convexity it must be a horoball in $\tau$.

\begin{figure}
\centering
\includegraphics[height=10cm]{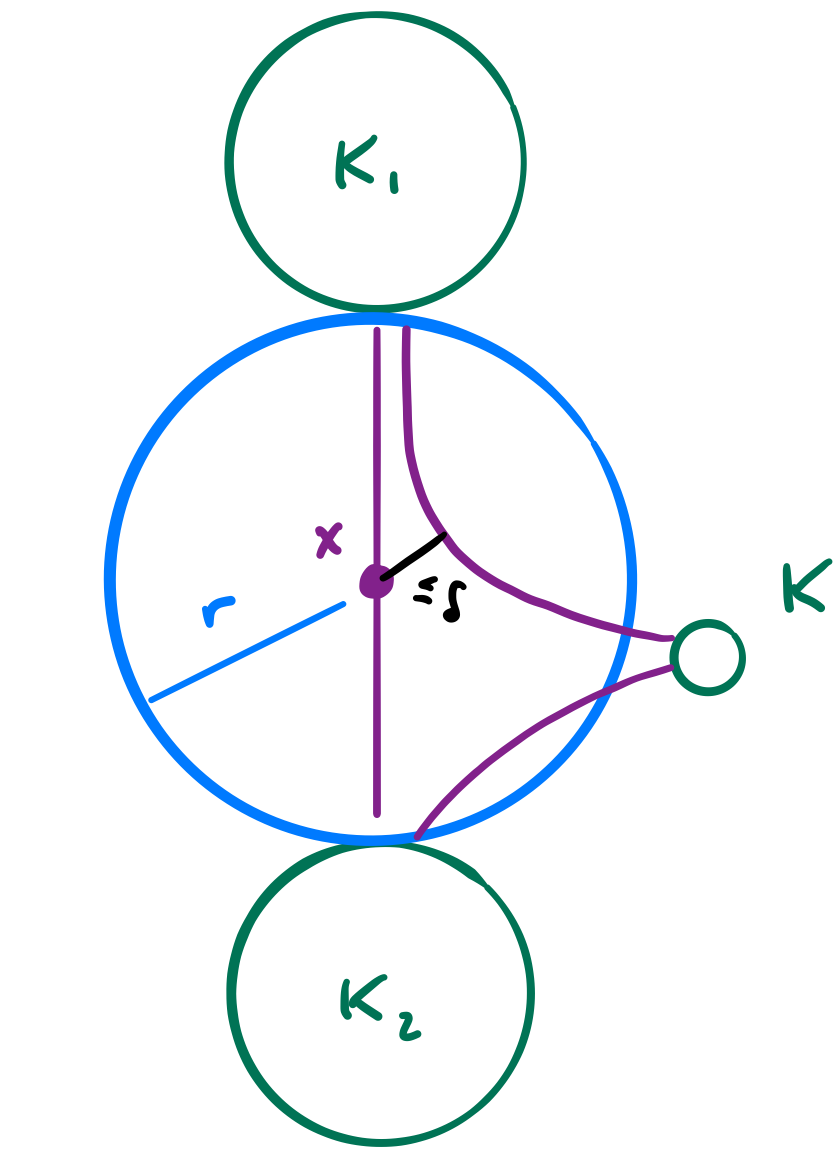}
\caption{Hyperbolicity guarantees that a horoball $K$ far from $x$ is far from some $K_i$ as well.}
\label{fig:pic-thin-triangle}
\end{figure}

We claim that $H$ satisfies the requisite bound. 
Let $K$ be any other horoball of $\tau$. Because $\bH^2$ is $\delta$-hyperbolic, the point $x$ is within $\delta$ of the geodesic segment between $K$ and $K_i$ for $i$ equal to either $1$ or $2$.
A standard application of the triangle inequality (see \autoref{fig:pic-thin-triangle}) then yields 
\[
d_{\bH^2}(K,K_i) \ge d_{\bH^2}(K,x) + d_{\bH^2}(x,K_i) - 2\delta = d_{\bH^2}(K,x) + r -2\delta~.
\] 
Because $d_{\bH^2}(K,K_i)\le 2r$, we conclude that $d_{\bH^2}(K,x)\le r+2\delta$, and
\[
d_{\bH^2}(H,K)\le d_{\bH^2}(H,x) + d_{\bH^2}(x,K) \le \log \frac2{\sqrt 3}+  r +2\delta~.
\]
By \autoref{lem:i vs d} we conclude that
\[
\iota(H,K) = e^{\frac12 d_{\bH^2}(H,K) } 
\le \frac2{\sqrt 3}e^{2\delta} \sqrt{\kappa(\tau)}~. \qedhere
\]
\end{proof}

\begin{remark}
\label{rem: Agol proof}
Ian Agol has suggested a slightly different version of the above proof: choose Farey labels for the horoballs in $\tau$, and enlarge the horoballs by $\log \kappa(\tau)+\log\frac2{\sqrt3}$. 
A variation on \autoref{lem:distance from horoball} together with \autoref{lem:i vs d} implies that every trio of these horoballs mutually intersect, so by Helly's theorem there is a point $x$ in their common intersection \cite{Helly}, and one may finish as above.
\end{remark}

\section{From kappa to eta}
\label{sec:pf of main thm}

As stated in the introduction, it is an exercise to show that 
\begin{equation*}
\label{eq:eta kappa}
\eta_T(k) = \max \{ n: \kappa_T(n)\le k\}~.
\end{equation*}
By \autoref{thm: kappa}, we may conclude $\eta_T(k)\le \max\{n: n-C\sqrt n\log n \le k\}$.

\autoref{main thm} now follows from the following lemma:

\begin{lemma}
\label{lem:functions}
Suppose that $C>0$ is a constant, and that $f:\bR\to\bR$ is an increasing, sublinear function, with $f(x)=o(x)$.
There is a $D>0$ so that for any $k\ge 1$ we have $\max\{ x: x- Cf(x)\le k\} \le k+ D f(k)$.
\end{lemma}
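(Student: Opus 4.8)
The plan is to bound the solution $x^*=\max\{x:x-Cf(x)\le k\}$ directly. Since $f$ is increasing and sublinear with $f(x)=o(x)$, the map $x\mapsto x-Cf(x)$ tends to $+\infty$, so the set $\{x:x-Cf(x)\le k\}$ is bounded above and $x^*$ satisfies $x^*-Cf(x^*)\le k$, hence $x^*\le k+Cf(x^*)$. The task is therefore to replace the $f(x^*)$ on the right-hand side with a bound in terms of $f(k)$, at the cost of enlarging the constant.

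First I would establish a crude a priori bound $x^*\le 2k$ for all sufficiently large $k$. Indeed, since $f(x)=o(x)$, there is $x_0$ with $Cf(x)\le x/2$ for $x\ge x_0$; for such $x$ we have $x-Cf(x)\ge x/2$, so $x-Cf(x)\le k$ forces $x\le 2k$. Thus for $k\ge k_0:=x_0/2$ we get $x^*\le 2k$. (For the finitely many $1\le k< k_0$, the quantity $\max\{x:x-Cf(x)\le k\}$ is a bounded function of $k$, so it is at most $k+D_0f(k)$ for a suitable constant $D_0$, using $f(k)\ge f(1)>0$; here one should assume $f>0$, or else simply note that the statement is only interesting when $f$ is positive and increasing, and absorb any degeneracy into $D$.)

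Next, feeding $x^*\le 2k$ back into $x^*\le k+Cf(x^*)$ and using monotonicity of $f$ gives $x^*\le k+Cf(2k)$. The final step is to control $f(2k)$ by $f(k)$. This is where one must be slightly careful: a general increasing sublinear $f$ need not satisfy $f(2k)\le \mathrm{const}\cdot f(k)$ (e.g.\ pathological $f$ can be nearly flat then jump). However, in our application $f(x)=\sqrt x\log x$ is regularly varying and plainly satisfies $f(2k)\le 3f(k)$ for $k\ge 2$, so the cleanest fix is to either (i) add the mild hypothesis that $f$ is ``doubling,'' $f(2x)\le \lambda f(x)$, which $\sqrt x\log x$ obeys, or (ii) simply verify the lemma for the specific $f(x)=\sqrt x\log x$ we need, where $f(2k)=\sqrt2\sqrt k(\log k+\log 2)\le 2\sqrt k\log k = 2f(k)$ for $k\ge 2$. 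Then $x^*\le k+Cf(2k)\le k+\lambda C f(k)$, and taking $D=\max\{\lambda C,\, D_0\}$ (with $D_0$ handling small $k$) completes the proof.

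The main obstacle is exactly this last point: ``increasing and sublinear'' alone is too weak to guarantee $f(2k)=O(f(k))$, so the honest move is to note that the lemma is applied only with $f(x)=\sqrt x\log x$ and check the doubling bound for that function explicitly (a one-line computation as above), or to strengthen the hypothesis. Everything else — the a priori linear bound from $f(x)=o(x)$, the substitution, and bookkeeping of constants over the bounded initial range — is routine. I would present the proof for general doubling $f$ and then remark that $\sqrt x\log x$ qualifies, which is all \autoref{main thm} requires.
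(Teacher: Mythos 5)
Your argument is correct and, in skeleton, the same as the paper's: obtain an a priori linear bound on $F(k)=\max\{x:\,x-Cf(x)\le k\}$, feed it back through $F(k)\le k+Cf(F(k))$ using monotonicity, and then compare $f(\mathrm{const}\cdot k)$ with $f(k)$. Where you diverge is in how that last comparison is justified, and this is really a disagreement about what ``sublinear'' means. You read it as $f(x)=o(x)$, and under that reading your objection is right: an increasing $o(x)$ staircase can have $f(2k)/f(k)$ unbounded, and then the conclusion genuinely fails, so some doubling-type hypothesis is needed. The paper, however, uses ``sublinear'' as a quantitative scaling property --- its proof invokes $f(C_1k)\le k\,f(C_1)$ and $f(C_1k)\le C_1f(k)$, i.e.\ $f(\lambda x)\le\lambda f(x)$ for $\lambda\ge1$ --- which supplies both the a priori bound $F(k)<C_1k$ (you instead got this from $f=o(x)$, which is fine) and the final comparison, making doubling automatic; \autoref{rem:sloppy lemma} confirms the authors regard such a quantitative property (or quasi-subadditivity) as the operative hypothesis. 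So your fix --- adding a doubling assumption or verifying it for $f(x)=\sqrt x\log x$ --- is the same input in different clothing, and a perfectly acceptable, arguably more transparent, way to present the lemma. Two small points: your explicit check is slightly off, since $f(2k)\le 2f(k)$ fails for $2\le k\lesssim 5$ (e.g.\ $f(4)=4\log 2>2f(2)$), though your other claim $f(2k)\le 3f(k)$ for $k\ge2$ is fine and suffices; and your caveat about small $k$ and positivity is apt, since $f(1)=0$ for $\sqrt x\log x$, so the statement should in any case be read asymptotically, which is all the application to \autoref{main thm} requires.
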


\begin{proof}
Because $f(x)=o(x)$, there is a $C_1>0$ large enough so that $C_1-1>Cf(C_1)$. 
Because $f$ is sublinear, for any $k\ge 1$ we have
\[
(C_1-1)k > Ck \, f(C_1) \ge C \, f(C_1k)~.
\]
Adding $k$ to both sides and rearranging we find
\begin{equation}
\label{eq:too big}
C_1k-C\, f(C_1k) > k~.
\end{equation}

Let $F(k)=\max\{ x: x-Cf(x)\le k\}$.
By \eqref{eq:too big} we have $F(k) < C_1k$. 
Of course, by definition of $F(k)$ we have $F(k) - Cf(F(k)) \le k$. 
Because $f$ is increasing and sublinear, we find
\[
F(k) \le k+C f(F(k)) \le k+C f(C_1k) \le k+CC_1 f(k)~,
\]
as claimed.
\end{proof}

Because $\sqrt x\log x$ is increasing and sublinear, this completes the proof of \autoref{main thm}.

\begin{remark}
\label{rem:sloppy lemma}
The conclusion of \autoref{lem:functions} holds under much weaker assumptions. For instance, sublinearity of $f$ can be replaced by the assumption that there is some $C_2>0$ so that $f(x+y) $ is at most $C_2 f(x) + C_2 f(y)$.
\end{remark}

\bigskip

\bibliographystyle{alpha}
\newcommand{\etalchar}[1]{$^{#1}$}

\end{document}